\newtheorem{theorem}{Theorem}[section]
\newtheorem{corollary}[theorem]{Corollary}
\newtheorem{lemma}[theorem]{Lemma}
\theoremstyle{definition}
\newtheorem{example}[theorem]{Example}
\newtheorem{notation}[theorem]{Notation}
\newtheorem{remark}[theorem]{Remark}
\newtheorem{question}[theorem]{Question}
\begin{document}
	\title[Recognition by the set of exponents]{Recognition by the set of exponents in the prime factorization of the product of element orders}
\author[M. Baniasad Azad \& M. Arabtash]{Morteza Baniasad Azad \& Mostafa Arabtash}
	\address{Abolfazl Street, 22 Bahman Blvd., Bardsir, Kerman 78416-64979, Iran \newline }
	\email{baniasad84@gmail.com}
	\address{School of Information and Engineering, Dalarna University, Sweden\newline }
	\email{mob@du.se}
	
	\thanks{}
	\subjclass[2010]{20D60.}
	
	\keywords{Finite groups, simple group, element orders, product of element orders.}
	
	\begin{abstract}
Let $G$ be a finite group. Let  $\rho(G) =  \prod_{g \in G} o(g)={p_1}^{\alpha_1}  {p_2}^{\alpha_2} \cdots  {p_k}^{\alpha_k}$, where $p_1, p_2, \cdots, p_k$ are distinct prime numbers and $o(g)$ denotes the order of $g \in G$. The set of exponents in the prime factorization of the product of element orders is denoted by $ {\operatorname{Exp}}_{\rho}(G)$, i.e., $ {\operatorname{Exp}}_{\rho}(G)=\{\alpha_1,\alpha_2, \cdots,\alpha_k\}$.
		 
In this paper, we give a new characterization for some groups by $ {\operatorname{Exp}}_{\rho}(G)$. We prove that the groups ${\rm PSL}(2, 5) \times \mathbb{Z}_p$, ${\rm PSL}(2, 7)$ and ${\rm PSL}(2, 11)$ are uniquely determined by $ {\operatorname{Exp}}_{\rho}(G)$. Furthermore, we prove that the groups ${\rm PSL}(2, 5)$ and ${\rm PSL}(2, 13)$ are uniquely determined by the  parameters $ {\operatorname{Exp}}_{\rho}(G)$ and $|G|$.
Additionally, we prove that 
if  ${\operatorname{Exp}}_{\rho}(G) =  {\operatorname{Exp}}_{\rho}(\mathbb{Z}_{2qr})$, then $G \cong {\rm PSL}(2, 5)$  or $G \cong \mathbb{Z}_{2qr}$,
	where  $q$ and $r$ are distinct  odd prime numbers. 
	\end{abstract}

	\maketitle
	\section{\bf Introduction and Preliminary Results}
	All groups considered in the paper are finite.
	The order of $x \in G$ is denoted by $o(x)$ and the product of element orders of $G$ is denoted by
	$\rho(G)$, i.e., $\rho(G)=\prod_{x \in G} o(x)$.
	The cyclic group of order $n$ is denoted by $\mathbb{Z}_n$.
In \cite{PatassinizbMATH06708963},  Garonzi et al. proved that $\rho(G)<\rho(\mathbb{Z}_n)$, when $G$ is a  non-cyclic group of order $n$.
In \cite[Theorem A]{noce}, Domenico et al. proved that if $G$ is
a non-cyclic supersolvable group of order $n$ 
 and also either $G$ is nilpotent or $G$ is not metacyclic, then
 $\rho(G) \leq q^{-n(q-1)/q} \rho(\mathbb{Z}_n)$,
where $q$ is the smallest prime divisor of $n$.

Let $l(G)=\sqrt[|G|]{\rho(G)}/|G|$.
In \cite{Grazian}, Grazian et al. proved that if 
$l(G) > l(D_{2p})$, where $p$ is an odd prime dividing the order of $G$, then  $G$ is $p$-nilpotent.
In \cite{baniAust}, the first author and Khosravi proved that  
if $l(G)>l(S_3)$,  $l(G)>l(A_4)$ or $l(G)>l({\rm PSL}(2,5))$, then
$G$ is nilpotent, supersolvable or   solvable, respectively.
Also, in \cite{baniMal}, they proved that
the groups ${\rm PSL}(2,7)$ and ${\rm PSL}(2,11)$
are uniquely
determined by the product of their element orders.
In \cite{banihamedapplication,banihamedcom}, the first author 
 and colleagues introduced several groups that are uniquely determined by the product of their element orders; for instance,
the groups ${\rm PSL}(2,5) \times \mathbb{Z}_p$, where $p > 5$ is a prime number.

\begin{notation}
	Let  $n={p_1}^{\alpha_1}  {p_2}^{\alpha_2} \cdots  {p_k}^{\alpha_k}$,
where $p_1, p_2, \cdots, p_k$
	are distinct prime numbers.
We set $[n]_{p_i}=\alpha_i$,
$ {\operatorname{Exp}}(n)=\{\alpha_1, \alpha_2,  \cdots, \alpha_k\}=\{[n]_{p_1}, [n]_{p_2}, \cdots, [n]_{p_k}\} $
and $\pi(n)=\{p_1, p_2, \cdots, p_k\}$.
Here, $ {\operatorname{Exp}}(n)$ 
 denotes the set of exponents in the prime factorization of $n$.
 
For a finite group $G$, we define
$\pi(G):= \pi(|G|)$ and
 ${\operatorname{Exp}}_{\rho}(G):={\operatorname{Exp}}(\rho(G))$.
  Let   $\omega(G)$ 
 denote the set of element orders of $G$, i.e., $\omega(G)=\{o(x)|x \in G\}$.
\end{notation}
\begin{example}
	 For example, if $G \cong {\rm PSL}(2,5)$, then  $\rho(G) = 2^{15}\cdot3^{20}\cdot5^{24}$. Therefore
	 $[\rho(G)]_2=15$,  $[\rho(G)]_3=20$,  $[\rho(G)]_5=24$, $\pi(G)=\{2, 3, 5\}$
	  and $ {\operatorname{Exp}}_{\rho}(G)=\{15, 20, 24\}$.
\end{example}

In this paper, we prove that the groups ${\rm PSL}(2, 5) \times \mathbb{Z}_p$, ${\rm PSL}(2, 7)$ and ${\rm PSL}(2, 11)$ are uniquely determined by
the set of exponents in the prime factorization of their product of element orders, where $p>5$ is a prime number.
 Furthermore, we prove that ${\rm PSL}(2, 5)$ and ${\rm PSL}(2, 13)$
are uniquely determined by their orders and  the set of exponents in the prime factorization of  product of element orders.
Additionally, we prove that 
if  ${\operatorname{Exp}}_{\rho}(G) =  {\operatorname{Exp}}_{\rho}(\mathbb{Z}_{2qr})$, then $G \cong {\rm PSL}(2, 5)$  or $ G \cong \mathbb{Z}_{2qr}$,
where  $q$ and $r$ are distinct  odd prime numbers.
In fact, we generalize the results of \cite{baniMal} and \cite[Theorem 2.3]{banihamedcom}.
For the proof of these results, we need the following lemmas.
	\begin{lemma}\cite{frobenius1895verallgemeinerung}
	Let $G$ be a finite group and $m$ be a positive integer dividing $|G|$. If
	$L_m(G) = \lbrace g \in G|g^m = 1\rbrace$, then $m \mid |L_m(G)|$.
\end{lemma}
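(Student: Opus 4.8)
This is the classical theorem of Frobenius (1895), so in the paper it is invoked by citation; nevertheless, here is the route I would take for a self-contained proof. The natural framework is strong induction on $|G|$, proving the full assertion ``$m \mid |L_m(G)|$ for every divisor $m$ of $|G|$'' simultaneously for all groups of smaller order. The first step is a clean coprime reduction: if $m = p_1^{a_1}\cdots p_k^{a_k}$, then since the prime powers $p_i^{a_i}$ are pairwise coprime and multiply to $m$, it is enough to prove $p_i^{a_i} \mid |L_m(G)|$ for each $i$ separately. Thus I fix one prime $p$, let $p^a$ be the exact power of $p$ dividing $m$, write $m = p^a m'$ with $p \nmid m'$, and aim to show $p^a \mid |L_m(G)|$.

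The second step converts this to a pure prime-power modulus. Every $x$ with $x^m = 1$ factors uniquely as $x = us = su$, where $u$ is its $p$-part ($o(u)\mid p^a$) and $s$ its $p'$-part ($o(s) \mid m'$), both powers of $x$. Sorting solutions by their $p'$-part $s$ yields
\[
|L_m(G)| = \sum_{s} |L_{p^a}(C_G(s))|,
\]
where $s$ ranges over the $p'$-elements of $G$ with $o(s) \mid m'$ (every $u \in C_G(s)$ with $u^{p^a}=1$ being automatically a $p$-element). This index set is a union of conjugacy classes, and conjugate elements have conjugate centralizers, hence equal summands, so I may group the sum over classes. For a noncentral class with representative $s$, set $c = v_p(|C_G(s)|)$; the induction hypothesis applied to the \emph{proper} subgroup $C_G(s)$ gives $p^{\min(a,c)} \mid |L_{p^a}(C_G(s))|$, while $v_p([G:C_G(s)]) = v_p(|G|) - c$. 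Since $v_p(|G|) \ge a$, each such class contributes a term of $p$-valuation at least $a$. The central elements $s$ (those with $C_G(s) = G$) contribute $z\cdot|L_{p^a}(G)|$, where $z$ counts the central $p'$-elements $s$ with $o(s)\mid m'$. Hence $|L_m(G)| \equiv z\,|L_{p^a}(G)| \pmod{p^a}$, and the general case collapses to the single prime-power statement $p^a \mid |L_{p^a}(G)|$.

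The irreducible core, and the step I expect to be the main obstacle, is therefore this prime-power case $p^a \mid |L_{p^a}(G)|$, for which the fibering above degenerates and gives no information. I would reduce it to a $p$-group by collecting the elements of $p$-power order into a Sylow $p$-subgroup $P$, and then induct on $|P|$: choosing a central subgroup $Z = \langle z\rangle$ of order $p$ and passing to $P/Z$, one checks that for $a \ge 1$ each coset $xZ$ is either entirely contained in $L_{p^a}(P)$ or disjoint from it (since $(xz^j)^{p^a} = x^{p^a}$), which immediately yields $p \mid |L_{p^a}(P)|$. Extracting the full factor $p^a$, however, requires controlling how the obstruction $x^{p^a} \in Z$ is distributed across the fibers over $L_{p^a}(P/Z)$, and it is precisely this balancing that is delicate; this is the point at which Frobenius's original counting (or the sharper congruences of Kulakoff) is genuinely needed, and where I expect the real work to lie.
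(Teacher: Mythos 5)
The paper offers no proof of this lemma at all --- it is Frobenius's 1895 theorem, invoked purely by citation --- so your proposal must be judged against the classical proofs. Your reduction scaffolding is correct and is indeed the standard skeleton: the coprime splitting via CRT, the unique $p$-part/$p'$-part factorization giving the fibration $|L_m(G)| = \sum_s |L_{p^a}(C_G(s))|$, and the class-by-class valuation count (for a noncentral class the contribution has $p$-valuation at least $\min(a,c) + v_p(|G|) - c \geq a$ in both cases $c \geq a$ and $c < a$, using $v_p(|G|) \geq a$) are all sound, and they do legitimately collapse the theorem, by induction on $|G|$, to the single prime-power statement $p^a \mid |L_{p^a}(G)|$.

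At that core, however, there is a genuine gap, and one step is outright wrong. ``Collecting the elements of $p$-power order into a Sylow $p$-subgroup $P$'' is not possible: $L_{p^a}(G)$ is the union of the sets $L_{p^a}(P^x)$ over \emph{all} conjugates of $P$, and these overlap, so $|L_{p^a}(G)| \neq |L_{p^a}(P)|$ in general --- already in $S_3$ with $p^a = 2$ there are four solutions of $x^2 = 1$ but only two in any Sylow $2$-subgroup. Hence the problem does not reduce to $p$-groups by this route; relating the union over conjugate Sylow subgroups to a single one is precisely where the real counting lives (classical treatments handle the prime-power case instead by counting cyclic subgroups of order exactly $p^a$ and inducting through their normalizers, as in Frobenius's original paper, M.~Hall's book, or Isaacs--Robinson). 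Moreover, even if one grants the reduction, your central-quotient argument in a $p$-group extracts only a single factor $p$ from $|L_{p^a}(P)|$, not $p^a$, and you explicitly defer the remaining balancing to ``Frobenius's original counting.'' That flag is honest, but it means the irreducible core of the theorem --- the full strength of which the paper actually uses, e.g.\ for the divisibility of $B(p^b m) - B(p^{b-1}m)$ by $m$ in the proof of Lemma 2.4 --- is asserted rather than proven, so the proposal stands as a correct reduction framework, not a proof.
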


The number of elements of order $t$ is denoted by $s_t$.
We see that
\begin{align} \label{yek}
	\rho(G)=\prod_{t \in \omega(G)} t^{s_t},
	\qquad
	|G|=\sum_{t \in \omega(G)}s_t.
\end{align}

	We see
that $s_t = k_t \phi(n)$, where $k_t$ is the number of cyclic subgroups of order $t$. If $m$ is a divisor of $|G|$, then by the above lemma we obtain that
\begin{equation} \label{star}
	\phi(m) \mid s_m, \qquad m \mid \sum_{d \mid m} s_d.
\end{equation}

	\begin{lemma} \cite[Lemma 2.2]{baniMal} \label{karan}
	If $\rho(G)={p_1}^{\alpha_1}{p_2}^{\alpha_2}\cdots{p_k}^{\alpha_k}$, where $k\in \mathbb{N}$, then
	\begin{enumerate}
		\item $\pi(G)=\{{p_1}, {p_2}, \cdots, {p_k}\}$,
		\item $|G| \leq 1+{\alpha_1}+{\alpha_2}+\cdots+{\alpha_k}$, with equality if and only if $G$ is a  group have only elements of prime order.
	\end{enumerate}
\end{lemma}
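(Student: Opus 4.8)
The plan is to prove the two assertions separately, as each reduces to elementary counting. For part (1), I would establish the inclusions $\pi(\rho(G)) \subseteq \pi(G)$ and $\pi(G) \subseteq \pi(\rho(G))$. The first is immediate from Lagrange's theorem: since $o(g)$ divides $|G|$ for every $g \in G$, every prime occurring in $\rho(G) = \prod_{g \in G} o(g)$ already divides $|G|$. For the reverse inclusion I would invoke Cauchy's theorem: if a prime $p$ divides $|G|$, then $G$ has an element $g$ of order $p$, so $p \mid o(g) \mid \rho(G)$. (This existence also follows from \eqref{star} with $m = p$, which gives $p \mid s_1 + s_p = 1 + s_p$, forcing $s_p \neq 0$.) Together these yield $\{p_1,\dots,p_k\} = \pi(\rho(G)) = \pi(G)$.

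For part (2), the key step is to rewrite $\alpha_1 + \cdots + \alpha_k$ as a sum over element orders. Writing $\Omega(t)$ for the number of prime factors of $t$ counted with multiplicity, I would use the factorization $\rho(G) = \prod_{t \in \omega(G)} t^{s_t}$ from \eqref{yek} to compute, for each $i$, that $\alpha_i = [\rho(G)]_{p_i} = \sum_{t \in \omega(G)} s_t\,[t]_{p_i}$. Summing over $i$ and interchanging the order of summation gives
\[
\alpha_1 + \cdots + \alpha_k = \sum_{t \in \omega(G)} s_t \sum_{i=1}^{k} [t]_{p_i} = \sum_{t \in \omega(G)} s_t\,\Omega(t),
\]
where the inner equality uses part (1) to guarantee that every prime divisor of $t \in \omega(G)$ lies in $\{p_1,\dots,p_k\}$ (each such $t$ divides $|G|$ by Lagrange).

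The bound then follows by separating off the identity. Since $\Omega(1) = 0$ with $s_1 = 1$, while $\Omega(t) \ge 1$ for every $t \ge 2$, we obtain
\[
\alpha_1 + \cdots + \alpha_k = \sum_{t \in \omega(G),\, t \ge 2} s_t\,\Omega(t) \ge \sum_{t \in \omega(G),\, t \ge 2} s_t = |G| - 1,
\]
using $|G| = \sum_{t \in \omega(G)} s_t$ from \eqref{yek}. Rearranging gives $|G| \le 1 + \alpha_1 + \cdots + \alpha_k$. Equality holds precisely when $\Omega(t) = 1$ for every element order $t \ge 2$ occurring in $G$; since $\Omega(t) = 1$ is equivalent to $t$ being prime, this is exactly the condition that every non-identity element of $G$ has prime order.

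I do not anticipate a serious obstacle: the argument is a bookkeeping of $p$-adic valuations together with Cauchy's theorem (or \eqref{star}) for the nontrivial inclusion in part (1). The only point demanding genuine care is the equality analysis, where one must observe that the inequality $\Omega(t) \ge 1$ is an equality exactly when $t$ is prime, so that equality in the bound forces $\omega(G) \setminus \{1\}$ to consist solely of primes, and conversely.
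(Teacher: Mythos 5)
Your proposal is correct and follows essentially the same route as the source: the paper cites this lemma from \cite{baniMal} without reproducing a proof, and the intended argument is exactly your elementary counting --- Lagrange plus Cauchy (or the Frobenius-type divisibility $p \mid 1 + s_p$) for part (1), and the identity $\alpha_1+\cdots+\alpha_k=\sum_{t\in\omega(G)} s_t\,\Omega(t)\geq \sum_{t\geq 2} s_t = |G|-1$ for part (2). Your equality analysis is also right, since $s_t\geq 1$ for every $t\in\omega(G)$ and $\Omega(t)=1$ exactly when $t$ is prime, so equality forces every non-identity element to have prime order, and conversely.
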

	\begin{lemma}\cite[Proposition 1.1]{TzbMATH06233336} \label{productdirect}
	Let $G_1, G_2, \dots, G_k$ be finite groups having coprime orders. Then
	\begin{align*}
		\rho(G_1 \times G_2 \times \dots \times G_k)= \prod_{i=1}^{k} {\rho(G_i)}^{n_i},\quad \text{where} \quad 
		n_i =\prod_{\substack{j=1 \\ j\neq i}}^{k}|G_i|, i=1, 2, \dots, k.
	\end{align*}
\end{lemma}
	\begin{lemma} \label{mid1}   \cite[Lemma 2.6]{baniAust}
	Let $ G  $ be a finite group satisfying  $ G = P  \rtimes F$, where $P$ is a cyclic $p$-group
	for some prime $p$, $|F| > 1$ and $(p,|F|) = 1$. Then
	\[\rho(G) = \rho(P)^{|C_{F}(P)|} \rho(F)^{|P|} .\]
\end{lemma}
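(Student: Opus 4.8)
The plan is to compute $\rho(G)$ by partitioning $G$ into the cosets of $P$ and evaluating the product of element orders coset by coset. Since $G = P \rtimes F$ with $P \cap F = 1$, every element of $G$ is written uniquely as $xf$ with $x \in P$ and $f \in F$, giving the disjoint union $G = \bigsqcup_{f \in F} Pf$ with $Pf = \{xf : x \in P\}$. Thus
\[
\rho(G) = \prod_{f \in F}\ \prod_{x \in P} o(xf),
\]
and the whole problem reduces to understanding $o(xf)$ on each coset. Writing $\phi_f \in \operatorname{Aut}(P)$ for the automorphism $x \mapsto fxf^{-1}$, the key dichotomy is whether $f$ centralizes $P$ (i.e.\ $\phi_f = \mathrm{id}$) or not.

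First I would handle $f \in C_F(P)$. Here $x$ and $f$ commute, and since $o(x)$ is a power of $p$ while $o(f) \mid |F|$ is coprime to $p$, the orders are coprime and $o(xf) = o(x)\,o(f)$. Hence $\prod_{x \in P} o(xf) = \big(\prod_{x\in P}o(x)\big)\,o(f)^{|P|} = \rho(P)\,o(f)^{|P|}$, and multiplying over $f \in C_F(P)$ contributes $\rho(P)^{|C_F(P)|}\big(\prod_{f \in C_F(P)} o(f)\big)^{|P|}$.

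The main step is the case $f \notin C_F(P)$, where I claim every element of the coset $Pf$ has order exactly $o(f)$, so that $\prod_{x\in P} o(xf) = o(f)^{|P|}$. I would prove this by showing all elements $xf$ are $P$-conjugate to $f$. A direct computation gives $y f y^{-1} = \big(y\,\phi_f(y)^{-1}\big)f$ for $y \in P$, so the $P$-conjugates of $f$ are exactly $\{\psi(y)f : y \in P\}$, where $\psi(y) = y\,\phi_f(y)^{-1}$. Identifying $P \cong \mathbb{Z}/p^a\mathbb{Z}$ with $|P| = p^a$ and $\phi_f$ with multiplication by a unit $u$, the map $\psi$ becomes multiplication by $1-u$, which is surjective precisely when $1-u$ is a unit, i.e.\ when $u \not\equiv 1 \pmod p$. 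This is the crux: because $o(f)$ is coprime to $p$, the unit $u$ has prime-to-$p$ order, and the reduction $(\mathbb{Z}/p^a)^* \to (\mathbb{Z}/p)^*$ is injective on prime-to-$p$ torsion (its kernel $1+p\mathbb{Z}/p^a$ is a $p$-group); since $u \neq 1$ we conclude $u \not\equiv 1 \pmod p$. Hence $\psi$ is onto, every $xf$ is conjugate to $f$, and $o(xf) = o(f)$. When $p = 2$ this case is vacuous, since $\operatorname{Aut}(P)$ is a $2$-group while $|F|$ is odd, forcing $C_F(P) = F$ and reducing the statement to the direct-product case of Lemma \ref{productdirect}.

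Finally I would assemble the two contributions:
\begin{align*}
\rho(G) &= \rho(P)^{|C_F(P)|}\Big(\prod_{f \in C_F(P)} o(f)\Big)^{|P|}\Big(\prod_{f \notin C_F(P)} o(f)\Big)^{|P|}\\
&= \rho(P)^{|C_F(P)|}\Big(\prod_{f \in F} o(f)\Big)^{|P|} = \rho(P)^{|C_F(P)|}\rho(F)^{|P|}.
\end{align*}
The only genuine obstacle is the conjugacy claim for $f \notin C_F(P)$; everything else is bookkeeping. The subtle point there is the number-theoretic fact that a nontrivial prime-to-$p$ automorphism of a cyclic $p$-group acts nontrivially already modulo $p$, which is exactly what makes $1-u$ invertible and drives the whole computation.
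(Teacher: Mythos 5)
Your proof is correct, and since the paper does not prove this lemma itself but imports it verbatim from \cite[Lemma 2.6]{baniAust}, the comparison is with the standard argument behind that citation, which yours essentially reproduces: decompose $G$ into the cosets $Pf$, use coprime commuting orders for $f \in C_F(P)$, and for $f \notin C_F(P)$ observe that an automorphism of a cyclic $p$-group of order coprime to $p$ that is trivial modulo $p$ is trivial (the kernel of $(\mathbb{Z}/p^a)^* \to (\mathbb{Z}/p)^*$ being a $p$-group), so $y \mapsto y\,\phi_f(y)^{-1}$ is surjective and the entire coset is $P$-conjugate to $f$. Your treatment of the $p=2$ degeneracy, where $\operatorname{Aut}(P)$ is a $2$-group and hence $C_F(P)=F$, correctly closes the one case where the mod-$p$ argument would be vacuous, so the proof is complete as written.
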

	\begin{lemma}\cite{hallzbMATH02575957} \label{hall} 
	An integer $n = {p_1}^{\alpha_1} \dots {p_k}^{\alpha_k}$
	is the number of Sylow $p$-subgroups of a
	finite solvable group $G$ if and only if  ${p_i}^{\alpha_i} \equiv 1 \pmod{p}$
	for $i = 1, \dots, k$.
\end{lemma}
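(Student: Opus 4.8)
The plan is to prove the two implications of the biconditional separately, writing $n=p_1^{\alpha_1}\cdots p_k^{\alpha_k}$ throughout and letting $n_p(G)$ denote the number of Sylow $p$-subgroups of $G$. For the \emph{sufficiency} (``if'') direction I would \emph{construct} a solvable group whose Sylow $p$-count is exactly $n$; for the \emph{necessity} (``only if'') direction I would induct on $|G|$, peeling off a minimal normal subgroup and reducing to a single coprime-action computation. The whole argument rests on the elementary facts that $n_p(G)=|G:N_G(P)|$ for a Sylow $p$-subgroup $P$ and that $n_p(G)\equiv 1\pmod p$ by Sylow's theorem.

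For sufficiency, assume each $p_i^{\alpha_i}\equiv 1\pmod p$. For each $i$ let $V_i$ be the additive group of the field $\mathbb{F}_{p_i^{\alpha_i}}$, so $V_i\cong\mathbb{Z}_{p_i}^{\alpha_i}$, and since $p\mid p_i^{\alpha_i}-1=|\mathbb{F}_{p_i^{\alpha_i}}^{\times}|$ the multiplicative group contains an element $\lambda_i$ of order $p$. Letting a generator $g$ of $\mathbb{Z}_p$ act on $V_i$ as multiplication by $\lambda_i$ gives a fixed-point-free action, because $\lambda_i\neq 1$. I would then form the solvable group $G=(V_1\times\cdots\times V_k)\rtimes\mathbb{Z}_p$ with $g$ acting diagonally. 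A one-line semidirect-product computation shows a vector $v$ normalizes $\langle g\rangle$ iff $(1-g)\cdot v=0$, i.e. iff $v\in C_V(g)$; fixed-point-freeness forces $C_V(g)=0$, so $N_G(\langle g\rangle)=\langle g\rangle$ and hence $n_p(G)=|V|=\prod_i p_i^{\alpha_i}=n$.

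For necessity I argue by induction on $|G|$, the case $n=1$ being trivial. Pick a minimal normal subgroup $N$ of the solvable group $G$; then $N$ is an elementary abelian $q$-group. If $q=p$, then $N$ lies in every Sylow $p$-subgroup, passage to $G/N$ gives a bijection of Sylow $p$-subgroups and $n_p(G)=n_p(G/N)$, so the induction hypothesis applies directly. If $q\neq p$, the key identity is
\begin{align*}
n_p(G)=n_p(G/N)\cdot n_p(PN),
\end{align*}
which I would get from the $G$-equivariant surjection $\operatorname{Syl}_p(G)\to\operatorname{Syl}_p(G/N)$, $P'\mapsto P'N/N$: transitivity of $G$ on both sides makes all fibres equal in size, and the fibre over $PN/N$ is precisely $\operatorname{Syl}_p(PN)$. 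Since $PN=N\rtimes P$ with $N\cap P=1$, the Dedekind identity gives $N_{PN}(P)=P\,C_N(P)$, whence $n_p(PN)=|N:C_N(P)|$; coprime action of the $p$-group $P$ on the abelian $q$-group $N$ yields $N=C_N(P)\times[N,P]$, so $n_p(PN)=|[N,P]|=q^m$ for some $m\geq 0$, and Sylow's congruence forces $q^m\equiv 1\pmod p$. By induction $n_p(G/N)$ factors as a product of prime powers each $\equiv 1\pmod p$; multiplying by $q^m$ only alters the exponent of $q$, and if $q^s$ was its previous contribution then $q^{s+m}=q^s\,q^m\equiv 1\pmod p$. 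Hence every prime-power factor of $n_p(G)$ is $\equiv 1\pmod p$, closing the induction.

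The main obstacle is the necessity direction, and inside it the two structural facts that convert a global counting problem into one local congruence per chief factor: the multiplicativity $n_p(G)=n_p(G/N)\cdot n_p(PN)$ and the coprime-action decomposition $N=C_N(P)\times[N,P]$. The bookkeeping step of merging a fresh factor $q^m$ with an existing $q^s$ is immediate, since the two congruences simply multiply. By contrast, the sufficiency direction is routine once the fixed-point-free (Frobenius-type) action is set up and the normalizer of the Sylow complement is computed.
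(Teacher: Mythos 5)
Your proposal cannot be compared against an internal argument, because the paper does not prove this lemma at all: it is imported verbatim from Hall's 1928 paper \cite{hallzbMATH02575957} and used as a black box (indeed, it is never even invoked in the paper's main proofs). Judged on its own, your proof is correct and is essentially Hall's classical argument, so it is strictly more informative than the paper. Both directions check out. For sufficiency, the group $(V_1\times\cdots\times V_k)\rtimes\mathbb{Z}_p$, with a generator $g$ acting on $V_i=\mathbb{F}_{p_i^{\alpha_i}}^{+}$ as multiplication by an element $\lambda_i$ of multiplicative order $p$, is the standard Frobenius-type construction; one point worth making explicit is that the hypothesis $p_i^{\alpha_i}\equiv 1\pmod p$ forces $p_i\neq p$ for every $i$, which is exactly what guarantees both that $\lambda_i$ exists and that $\gcd(n,p)=1$, so that $\langle g\rangle$ really is a Sylow $p$-subgroup and $n_p(G)=|G:N_G(\langle g\rangle)|=|V|=n$. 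Your normalizer computation is right once you add the routine Dedekind reduction from a general element $(v,g^j)$ to $(v,1)$, and similarly $N_{PN}(P)=C_N(P)P$ needs the one-line commutator argument $[x,P]\subseteq N\cap P=1$ on top of Dedekind. For necessity, the induction via a minimal normal subgroup, the multiplicativity $n_p(G)=n_p(G/N)\cdot n_p(PN)$, and the coprime decomposition $N=C_N(P)\times[N,P]$ are all sound; in the fibre computation you should spell out that a Sylow $p$-subgroup $P''$ of $PN$ is Sylow in $G$ and satisfies $P''N=PN$ (an order count using $P''\cap N=1$), which is what identifies the fibre over $PN/N$ with $\operatorname{Syl}_p(PN)$. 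With those small steps made explicit, your proof is complete and self-contained.
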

\begin{lemma}\cite{huppert} \label{huppert}
	Let $ G ={\rm PSL}(2, q) $ where $ q $ is a $p$-power ($ p $ prime). Then\\
	(1) a Sylow $ p $-subgroup $ P $ of $ G $ is an elementary abelian group of
	order $ q $ and the number of Sylow $ p $-subgroup of $ G $ is $ q + 1 $,\\
	(2) $ G $ contains a cyclic subgroup $ A $ of order $\dfrac{q-1}{2}  $ such that $ N_G(u) $
	is a dihedral group of order $ q - 1 $ for every nontrivial element
	$ u \in A $,\\
	(3) $ G $ contains a cyclic subgroup $ B $ of order $ \dfrac{q+1}{2} $ such that $ N_G(u) $
	is a dihedral group of order $ q + 1 $ for every nontrivial element
	$ u \in B $,\\
	(4) the set $ \{ P^x, A^x, B^x | x \in G \} $ is a partition of $ G $.
\end{lemma}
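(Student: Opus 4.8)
The plan is to realise all three subgroup types explicitly inside ${\rm SL}(2,q)$ and then push the picture down to $G = {\rm PSL}(2,q) = {\rm SL}(2,q)/Z$, where $Z = \{\pm I\}$. Throughout I assume $q$ odd, exactly as the displayed orders $(q-1)/2$ and $(q+1)/2$ require (so $|Z| = 2$ and $|G| = q(q^2-1)/2$); the even case is entirely analogous with $Z$ trivial. I take $P$ to be the image of the unipotent subgroup $U = \{\begin{pmatrix} 1 & t \\ 0 & 1\end{pmatrix} : t \in \mathbb{F}_q\}$, which is isomorphic to $(\mathbb{F}_q,+)$ and hence elementary abelian of order $q$; since $U \cap Z = 1$, it injects into $G$. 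I take the preimage of $A$ to be the split torus $T = \{\operatorname{diag}(a, a^{-1}) : a \in \mathbb{F}_q^*\}$, cyclic of order $q-1$ because $\mathbb{F}_q^*$ is cyclic, and the preimage of $B$ to be the non-split torus $T'$ obtained as the norm-one subgroup of $\mathbb{F}_{q^2}^*$ acting on $\mathbb{F}_{q^2} \cong \mathbb{F}_q^2$, cyclic of order $q+1$. After factoring out $Z$ these descend to cyclic groups of orders $(q-1)/2$ and $(q+1)/2$.

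For part (1), once $P$ is seen to be a Sylow $p$-subgroup (its order $q = p^f$ is the full power of $p$ dividing $|G|$), I would compute $N_G(P)$. The normaliser of $U$ in ${\rm SL}(2,q)$ is the Borel subgroup of all upper-triangular matrices, of order $q(q-1)$; hence $|N_G(P)| = q(q-1)/2$ and the number of Sylow $p$-subgroups is $|G:N_G(P)| = q+1$.

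For parts (2) and (3), I would produce the dihedral normalisers by exhibiting an explicit inverting element. The Weyl element $w = \begin{pmatrix} 0 & 1 \\ -1 & 0\end{pmatrix}$ conjugates $\operatorname{diag}(a,a^{-1})$ to $\operatorname{diag}(a^{-1},a)$, so $w$ inverts $T$; together with $A$ this gives a dihedral group of order $q-1$, and a centraliser computation (the centraliser of a regular split element is exactly the split torus containing it) shows this is the full $N_G(u)$ for every nontrivial $u \in A$. The non-split case is symmetric: the Frobenius-type element $\lambda \mapsto \lambda^q$ inverts the norm-one torus $T'$, producing a dihedral $N_G(u)$ of order $q+1$ for nontrivial $u \in B$.

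Part (4) is the crux and where I expect the real work. The key structural input is that $P$, $A$ and $B$ are trivial-intersection subgroups whose conjugates tile $G$. I would argue through the classification of a nontrivial element by the characteristic polynomial of one of its lifts to ${\rm SL}(2,q)$: either (i) it has a repeated root in $\mathbb{F}_q$, so the element is unipotent and conjugate into $P$; (ii) it has two distinct roots in $\mathbb{F}_q$, so the element is diagonalisable and conjugate into $A$; or (iii) its characteristic polynomial is irreducible over $\mathbb{F}_q$, so its eigenvalues lie in $\mathbb{F}_{q^2}\setminus\mathbb{F}_q$ and it is conjugate into $B$. The trivial-intersection property follows because a regular element pins down its containing subgroup uniquely as the relevant part of its centraliser, so it cannot sit in two distinct conjugates. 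I would then close the argument with the counting identity
\begin{equation*}
1 + (q+1)(q-1) + \frac{q(q+1)}{2}\cdot\frac{q-3}{2} + \frac{q(q-1)}{2}\cdot\frac{q-1}{2} = \frac{q(q^2-1)}{2} = |G|,
\end{equation*}
in which the four terms count the identity, the nontrivial unipotent elements spread across the $q+1$ conjugates of $P$, and the nontrivial elements spread across the $|G:N_G(A)| = q(q+1)/2$ conjugates of $A$ and the $|G:N_G(B)| = q(q-1)/2$ conjugates of $B$. Since the three disjoint families already account for every element, they form a partition. The main obstacle is marshalling the centraliser computations that simultaneously yield the trivial-intersection property and the exact normaliser orders; once those are in place, the counting forces the tiling and all four parts follow.
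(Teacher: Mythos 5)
The paper offers no proof of this lemma at all: it is quoted from Huppert's \emph{Endliche Gruppen I} (Kapitel II, \S 8), so the only benchmark is Huppert's classical argument, and your outline follows it in all essentials --- the same three subgroups (unipotent group, images of the split and non-split tori), dihedral normalisers produced by an explicit inverting element, and the partition via trivial intersection plus counting. Your counting identity is correct: the two torus terms sum to $q(q^2-2q-1)/2$, and adding $1+(q^2-1)$ gives $q(q^2-1)/2=|G|$. One step is easier than you suggest: cross-family disjointness needs no centraliser input, since $(q+1)/2-(q-1)/2=1$ makes the orders $(q-1)/2$ and $(q+1)/2$ coprime and $p$ divides neither, so element orders alone separate the three families and trivial intersection is only needed \emph{within} each family. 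Two small caveats: as stated with the orders $(q\pm1)/2$ the lemma presupposes $q$ odd (which is all the paper uses, $q\in\{5,7,11,13\}$); for even $q$ Huppert's statement has cyclic subgroups of orders $q-1$ and $q+1$, so your ``entirely analogous'' aside proves a differently worded lemma, not this one with $Z=1$.

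The one genuinely false step is the assertion that ``the centraliser of a regular split element is exactly the split torus containing it.'' This is true in ${\rm SL}(2,q)$ but fails in $G={\rm PSL}(2,q)$ precisely at the involution: when $q\equiv 1\pmod 4$ the subgroup $A$ contains an involution $u$, the image of ${\rm diag}(a,a^{-1})$ with $a^{2}=-1$, and since your element $w$ conjugates this lift to its inverse while $u=u^{-1}$ in $G$, the image of $w$ \emph{centralises} $u$; in fact $C_G(u)$ is the full dihedral group of order $q-1$, not the torus. Consequently your trivial-intersection argument (``a regular element pins down its containing subgroup uniquely as the relevant part of its centraliser'') breaks exactly at involutions, and symmetrically inside $B$ when $q\equiv 3\pmod 4$. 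The repair is routine but must be made: for $o(u)>2$, an element $h$ centralising $u$ satisfies $hgh^{-1}=\pm g$ for a lift $g={\rm diag}(a,a^{-1})$, and $hgh^{-1}=-g$ would force $\{a,a^{-1}\}=\{-a,-a^{-1}\}$, i.e.\ $a^{2}=-1$, which is excluded, so $C_G(u)=A$ as you intend; for the involution, $C_G(u)=N_G(\langle u\rangle)$ is dihedral of order $q-1$ (so part (2) still holds) and contains a \emph{unique} cyclic subgroup of order $(q-1)/2$ whenever $(q-1)/2>2$, which again pins down $A$, while for $q=5$ one has $A=\langle u\rangle$ and trivial intersection is vacuous. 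You should also note that the exact equality $N_G(A)=D_{q-1}$ (not merely $\geq$) needs an argument --- cleanest via the action on the projective line: $A$ fixes exactly two points and $N_G(A)$ lies in the setwise stabiliser of that pair, which is dihedral of order $q-1$. With these cases supplied, your counting identity closes part (4) exactly as you claim.
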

	\section{\bf The main results}
	For the proof of the main results, we need the following lemmas.
	
	\begin{lemma}\label{111}
		If $|{\operatorname{Exp}}_{\rho}(G)|=m$, then $|\pi(G)|\geqslant m$.
	\end{lemma}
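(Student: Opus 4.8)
The plan is to prove the contrapositive-style bound directly by relating the number of distinct exponents to the number of distinct primes. Recall that $\rho(G) = p_1^{\alpha_1} p_2^{\alpha_2} \cdots p_k^{\alpha_k}$, where by Lemma~\ref{karan}(1) the primes $p_1, \dots, p_k$ are exactly the primes dividing $|G|$, so $|\pi(G)| = k$. The quantity ${\operatorname{Exp}}_{\rho}(G)$ is the \emph{set} $\{\alpha_1, \dots, \alpha_k\}$, and $|{\operatorname{Exp}}_{\rho}(G)| = m$ means that among the $k$ exponents there are exactly $m$ distinct values. First I would simply observe that a set formed from a list of $k$ elements can have at most $k$ distinct members, so the number of distinct exponents cannot exceed the number of exponents.

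The key step is therefore the inequality $m = |\{\alpha_1, \dots, \alpha_k\}| \leq k = |\pi(G)|$, which is the desired conclusion $|\pi(G)| \geqslant m$. This follows because the map sending $i \mapsto \alpha_i$ from the index set $\{1, 2, \dots, k\}$ onto ${\operatorname{Exp}}_{\rho}(G)$ is a surjection, and the cardinality of the image of a finite set under any map is at most the cardinality of the domain. Concretely, $|{\operatorname{Exp}}_{\rho}(G)| = |\{\alpha_1, \dots, \alpha_k\}| \leq k$, and by Lemma~\ref{karan}(1) we have $|\pi(G)| = |\{p_1, \dots, p_k\}| = k$ since the $p_i$ are distinct. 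Combining these gives $m \leq |\pi(G)|$.

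Honestly, I expect there to be no real obstacle here: this is essentially a counting tautology, and the only substantive input is the identification of $k$ with $|\pi(G)|$ via Lemma~\ref{karan}(1), which guarantees that every prime appearing in the factorization of $\rho(G)$ is a prime divisor of $|G|$ (and conversely, so that the count matches exactly). The lemma serves as a small structural warm-up establishing that the number of primes dividing $|G|$ must be at least as large as the number of distinct exponent values, a fact that will presumably be used to bound $|\pi(G)|$ from below in the later recognition arguments.
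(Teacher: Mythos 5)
Your proof is correct and matches the paper's intent exactly: the paper simply states ``The proof is straightforward,'' and your argument --- that the set of exponents $\{\alpha_1,\dots,\alpha_k\}$ has at most $k$ elements while Lemma~\ref{karan}(1) identifies $k$ with $|\pi(G)|$ --- is precisely the counting observation being left to the reader.
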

\begin{proof}
	The proof is straightforward.
\end{proof}
	\begin{lemma} \label{bhamed}  
		Let $G$ be a finite group such that $|G|=p^{\alpha}m$ and  $(p,m)=1$, where $p$ is a  prime  number. Then
		$m $ divides $ [\rho(G)]_p$.
	\end{lemma}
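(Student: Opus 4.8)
The plan is to express the exponent $[\rho(G)]_p$ as a sum of $p$-exponents of element orders and to reorganize it into level sets governed by the Frobenius lemma stated at the start of the paper. Since $\rho(G)=\prod_{g\in G}o(g)$ and the map $n\mapsto[n]_p$ is additive on products, we have $[\rho(G)]_p=\sum_{g\in G}[o(g)]_p$. Because $o(g)\mid|G|=p^\alpha m$, every term satisfies $[o(g)]_p\le\alpha$, and $[o(g)]_p$ is precisely the number of integers $i$ with $1\le i\le\alpha$ and $p^i\mid o(g)$. Summing over $g$ and interchanging the two finite sums yields
\[
[\rho(G)]_p=\sum_{i=1}^{\alpha}N_i,\qquad N_i:=\bigl|\{g\in G: p^i\mid o(g)\}\bigr|,
\]
so it suffices to show that $m\mid N_i$ for each $i$ in the range $1\le i\le\alpha$.

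The key step is to identify the complement of each level set as a set of the form $L_{m'}(G)$. I claim that, for fixed $i$, an element $g$ satisfies $p^i\nmid o(g)$ if and only if $g^{p^{i-1}m}=1$. Indeed, writing $o(g)=p^a b$ with $b\mid m$ and $a=[o(g)]_p$, the condition $o(g)\mid p^{i-1}m$ is equivalent, since $\gcd(b,p)=1$, to $a\le i-1$, that is, to $p^i\nmid o(g)$. Therefore $\{g\in G: p^i\nmid o(g)\}=L_{p^{i-1}m}(G)$ in the notation of the opening lemma. As $1\le i\le\alpha$ gives $p^{i-1}m\mid p^\alpha m=|G|$, the Frobenius lemma applies and yields $p^{i-1}m\mid|L_{p^{i-1}m}(G)|$; in particular $m\mid|L_{p^{i-1}m}(G)|$.

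Finally I would finish by a counting difference: $N_i=|G|-|L_{p^{i-1}m}(G)|=p^\alpha m-|L_{p^{i-1}m}(G)|$, and since $m$ divides each term on the right, $m\mid N_i$. Summing over $i$ then gives $m\mid[\rho(G)]_p$, completing the argument. I do not anticipate a genuine obstacle; the delicate point is purely the bookkeeping that matches the complement of the $i$-th level set with $L_{p^{i-1}m}(G)$ and checks that the hypothesis $p^{i-1}m\mid|G|$ of Frobenius's lemma is met for every $i$ with $1\le i\le\alpha$. The remainder is the routine reorganization of the valuation sum.
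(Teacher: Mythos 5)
Your proof is correct and follows essentially the same route as the paper: both arguments reduce to applying Frobenius's theorem to the sets $L_{p^{j}m}(G)$ and organizing the $p$-valuation of $\rho(G)$ by level sets of $[o(g)]_p$. The only difference is bookkeeping — you count cumulatively ($p^i\mid o(g)$, writing $N_i=|G|-|L_{p^{i-1}m}(G)|$) while the paper counts elements of exact valuation $b$ as the difference $B(p^bm)-B(p^{b-1}m)$ — which is just an Abel-summation rearrangement of the same sum.
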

\begin{proof}
	We have
	\[
	[\rho(G)]_p = \sum_{p^b \parallel t} \left(b \cdot s_t\right) = \sum_b \left[b \cdot \left(\sum_{p^b \parallel t} s_t\right)\right],
	\]
	where \textbf{$p^b \parallel t$} means that $p^b$ divides $t$ but $p^{b+1}$ does not.
	
	Now, if we define
	\[
	B(k) := \left|\left\{x \in G \mid x^k = 1\right\}\right|,
	\]
	then, by Frobenius' theorem, if $k$ divides $|G|$, then $k$ divides $B(k)$. Therefore,
	\[
	\sum_{p^b \parallel t} s_t = B\left(p^b m\right) - B\left(p^{b-1} m\right)
	\]
	is divisible by $m$ for every $b$, and thus $[\rho(G)]_p$ is divisible by $m$.
\end{proof}
\begin{remark} \label{rem}
Lemma~\ref{bhamed} implies that if $G$ is not a $p$-group, then every prime divisor of $|G|$ divides some element of $\operatorname{Exp}_\rho(G)$. In other words, $\pi(G)$ is contained in the set $E(G)$ consisting of all prime numbers dividing some element of $\operatorname{Exp}_\rho(G)$, i.e.,
\[
\pi(G) \subseteq E(G).
\]
Therefore, if $|E(G)| = \left|\operatorname{Exp}_\rho(G)\right|$, then $\pi(G) = E(G)$. In this case, we can determine all prime divisors of $|G|$ solely from $\operatorname{Exp}_\rho(G)$.

In any case, knowing $\operatorname{Exp}_\rho(G)$ provides excellent control over $\pi(G)$.
\end{remark}
	\begin{lemma} \label{bbhamed}  
		Let $G$ be a finite group such that $p$ divides $| G |$, where $p$ is a  prime  number. Then
		$\phi(p) $ divides  $ [\rho(G)]_p$.
	\end{lemma}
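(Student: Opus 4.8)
The plan is to reuse the expansion of $[\rho(G)]_p$ from the proof of Lemma~\ref{bhamed} and to combine it with the divisibility $\phi(t)\mid s_t$ recorded in \eqref{star}. Writing $[\rho(G)]_p=\sum_{t\in\omega(G)} b(t)\,s_t$, where $p^{b(t)}\parallel t$ records the exponent of $p$ in the element order $t$ and $s_t$ is the number of elements of order $t$, I observe that only those $t$ divisible by $p$ contribute, since $b(t)=0$ whenever $p\nmid t$. Hence it suffices to prove that $\phi(p)$ divides $s_t$ for every $t\in\omega(G)$ with $p\mid t$.

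The single arithmetic fact I need is that $p\mid t$ forces $\phi(p)\mid\phi(t)$. Indeed, if $t=p^{a} t'$ with $a\ge 1$ and $(p,t')=1$, then $\phi(t)=p^{a-1}(p-1)\phi(t')$, so $\phi(p)=p-1$ divides $\phi(t)$. Combining this with the relation $\phi(t)\mid s_t$ from \eqref{star} yields $\phi(p)\mid s_t$ for each such $t$.

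Consequently every term $b(t)\,s_t$ in the expansion of $[\rho(G)]_p$ is divisible by $\phi(p)$, and therefore so is the full sum $[\rho(G)]_p$. Since $p\mid |G|$, Cauchy's theorem produces an element of order $p$, so the sum is genuinely nonempty; this makes the statement nonvacuous but plays no role in the divisibility itself.

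I expect no serious obstacle: the argument is essentially a one-line combination of the valuation identity $\phi(t)=p^{a-1}(p-1)\phi(t')$ with the known divisibility $\phi(t)\mid s_t$. The only point requiring a moment's care is to control the contribution of \emph{all} element orders divisible by $p$, rather than merely the elements of order exactly $p$, which is precisely what the factorization of $\phi(t)$ delivers.
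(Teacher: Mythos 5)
Your proposal is correct and follows essentially the same route as the paper: both expand $[\rho(G)]_p$ as a sum of terms $b\cdot s_t$ over element orders $t$ divisible by $p$, and both use the factorization $\phi(t)=\phi(p^{b})\phi(t')$ together with $\phi(t)\mid s_t$ (the paper writes $s_t=m_t\phi(t)$ with $m_t$ the number of cyclic subgroups of order $t$) to conclude $\phi(p)\mid s_t$ for each such $t$. No substantive difference.
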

	\begin{proof}	
		We note
		that $s_t = m_t \phi(t)$, where $m_t$ is the number of cyclic subgroups of order $t$.
		We  have
		\begin{align*}
			[\rho(G)]_p=\sum_{ \substack{ t \in \omega(G) \\ p^b \| t}} \left(b \cdot s_t\right)  =\sum_{ \substack{ t \in \omega(G) \\ p^b \| t}}
		\left(b \cdot m_t \cdot \phi(t)\right)
		\end{align*}
	Since $p^b \| t$, it follows that $ t=p^b \cdot s $, where $(p,s)=1$.
		Therefore $\phi(t)= \phi(p^b) \phi(s)$. 
		We know $\phi(p) $ divides $ \phi(p^n)$, for any $n \in \mathbb{N}$.  Thus $\phi(p) \mid s_t$, whenever  $p \mid t$.
		Consequently   $\phi(p) \mid [\rho(G)]_p$.
	\end{proof}
	\begin{lemma} \label{bbbhamed}  
		Let $G$ be a finite group of order $pm$ such that  $(p,m)=1$ and $p$ is a  prime  number. Then
		$p \nmid [\rho(G)]_p$.
	\end{lemma}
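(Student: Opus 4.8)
The plan is to convert the statement into a count modulo $p$ and settle it with an orbit-counting argument on $p'$-elements. First I would use that $|G|=pm$ with $(p,m)=1$ forces a Sylow $p$-subgroup $P$ to have order exactly $p$. Hence for every $t\in\omega(G)$ the $p$-part of $t$ is at most $p$: writing $p^{c_t}\,\|\,t$ we have $c_t\in\{0,1\}$, so $[\rho(G)]_p=\sum_{t\in\omega(G)}c_t\,s_t=\sum_{p\mid t}s_t$ is precisely the number $N$ of elements of $G$ whose order is divisible by $p$. Setting $L_m(G)=\{x\in G:x^m=1\}$, the condition $x^m=1$ is equivalent to $o(x)\mid m$, i.e.\ to $x$ being a $p'$-element, so $N=|G|-|L_m(G)|=pm-|L_m(G)|$ and thus $[\rho(G)]_p\equiv-|L_m(G)|\pmod p$. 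The whole problem is therefore reduced to proving $p\nmid|L_m(G)|$. Note that Frobenius' theorem yields only $m\mid|L_m(G)|$, which is the divisibility of Lemma~\ref{bhamed} but says nothing about the prime $p$ itself, so a finer argument is needed.

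Second, I would let $P$ act on $L_m(G)$ by conjugation, which is legitimate because conjugation preserves element orders. Since $|P|=p$, every orbit has length $1$ or $p$, so $|L_m(G)|\equiv|L_m(G)^P|\pmod p$, where $L_m(G)^P$ denotes the fixed points. A $p'$-element is fixed by all of $P$ exactly when it commutes with $P$, so $L_m(G)^P$ is the set of $p'$-elements of $C_G(P)$.

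Finally, it remains to count the $p'$-elements of $C_G(P)$. As $P$ is cyclic of prime order it is abelian, so $P\le C_G(P)$ and $P$ is a central Sylow $p$-subgroup of $C_G(P)$. By Schur--Zassenhaus the normal subgroup $P$ admits a complement $K$, and since $P$ is central the decomposition is direct: $C_G(P)=P\times K$ with $|K|=|C_G(P)|/p$ coprime to $p$. The $p'$-elements of $P\times K$ are exactly those lying in $\{1\}\times K$, so there are $|K|$ of them and $p\nmid|K|$. Combining the three steps gives $[\rho(G)]_p\equiv-|L_m(G)|\equiv-|K|\not\equiv0\pmod p$, as required.

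I expect the last step to be the crux: everything reduces to showing that the number of $p'$-elements of $C_G(P)$ is coprime to $p$, and this hinges on the structural fact $C_G(P)=P\times K$ coming from $P$ being a central Sylow subgroup (equivalently, one may invoke Burnside's normal $p$-complement theorem, since $P$ is central in its normaliser). Identifying the conjugation fixed points with the $p'$-part of $C_G(P)$ is the bookkeeping that makes this structure usable; the reductions in the first two paragraphs are routine once the observation that the $p$-part of every order is at most $p$ is in place.
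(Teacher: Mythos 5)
Your proof is correct, but it takes a genuinely different route from the paper's for the key step. Both arguments begin the same way: since the Sylow $p$-subgroup has order $p$, every element order has $p$-part $1$ or $p$, so $[\rho(G)]_p=\sum_{p\mid t}s_t=pm-|L_m(G)|$, reducing everything to the size of $L_m(G)$ modulo $p$. From there the paper argues by contradiction: if $p\mid[\rho(G)]_p$ then $p\mid|L_m(G)|$, and Frobenius' theorem already gives $m\mid|L_m(G)|$, so coprimality forces $pm\mid|L_m(G)|$, whence $|L_m(G)|\geq pm$ and $[\rho(G)]_p\leq 0$ --- impossible since Cauchy's theorem supplies an element of order $p$. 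So your remark that Frobenius ``says nothing about the prime $p$ itself, so a finer argument is needed'' is not quite right: once you assume $p\mid[\rho(G)]_p$ for contradiction, Frobenius' $m$-divisibility is exactly enough. Your alternative --- conjugation action of $P$ on $L_m(G)$, orbit counting modulo $p$, and the splitting $C_G(P)=P\times K$ via Schur--Zassenhaus or Burnside --- is valid and in fact proves more: it computes $[\rho(G)]_p\equiv-|C_G(P)|/p\pmod p$ rather than merely ruling out divisibility. The trade-off is that you invoke heavier structural machinery where the paper gets by with the single elementary tool (Frobenius) it already uses throughout.
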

	\begin{proof}
		On the contrary, let $p $ divides $ [\rho(G)]_p$.
		By $(\ref{yek})$, we  obtain that
		\begin{align*}
			[\rho(G)]_p=\sum_{ \substack{ t \in \omega(G) \\ p \mid t}}s_t  = |G|-\sum_{ \substack{ t \in \omega(G) \\ p\,  \nmid \,t }}s_t  
			= pm - \sum_{d \mid m} s_d.
		\end{align*}
		Therefore $p $ divides $ \sum_{d \mid m} s_d$. 
		On the other hand, using  $(\ref{star})$, $m$ divides $\sum_{d \mid m} s_d$. 
	Also,	since $(p,m)=1$, it follows that   $pm \mid \sum_{d \mid m} s_d$. Thus $[\rho(G)]_p\leq0$, which is a contradiction. Thus we have  $p \nmid [\rho(G)]_p$.
	\end{proof}
	\begin{corollary}\label{2-part}
		For a finite group $G$,  $[\rho(G)]_p$ is even, where $p$ is an odd prime number and $p \mid |G|$. 
		Also, if $2 $ divides $|G|$, then $[\rho(G)]_2$ is odd.
	\end{corollary}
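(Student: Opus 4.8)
The first assertion follows at once from Lemma~\ref{bbhamed}. If $p$ is an odd prime with $p\mid |G|$, then $\phi(p)=p-1$ is even, and by Lemma~\ref{bbhamed} it divides $[\rho(G)]_p$; hence $[\rho(G)]_p$ is even.

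For the second assertion the plan is to determine the parity of the sum
\[
[\rho(G)]_2=\sum_{b\geq 1} b\,N_b,
\]
where $N_b:=\sum_{t\in\omega(G),\,2^b\| t} s_t$ denotes the number of elements of $G$ whose order is exactly divisible by $2^b$. First I would isolate the high-$2$-power contributions: for any $b\geq 2$ and any $t$ with $2^b\| t$, writing $t=2^b s$ with $s$ odd gives $2^{b-1}=\phi(2^b)\mid\phi(t)\mid s_t$, so $s_t$ is even and hence $N_b$ is even for every $b\geq 2$. Consequently each term with $b\geq 2$ vanishes modulo $2$, and the computation collapses to
\[
[\rho(G)]_2\equiv N_1 \pmod 2.
\]

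It then remains to show that $N_1$ is odd. Since $N_b$ is even for $b\geq 2$, the sum $\sum_{b\geq 1}N_b$ — which counts exactly the elements of $G$ of even order — is itself congruent to $N_1$ modulo $2$. This count equals $|G|-B$, where $B$ denotes the number of elements of odd order. Here the crucial observation is that $B$ is odd: a non-identity element of odd order is never equal to its inverse, for $x=x^{-1}$ forces $x^2=1$, i.e.\ $o(x)\in\{1,2\}$; thus these elements split into pairs $\{x,x^{-1}\}$ and only the identity is left over. Since $2\mid|G|$, we conclude that $|G|-B$ is odd, whence $N_1$ is odd and $[\rho(G)]_2$ is odd.

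The only point requiring care is the reduction $[\rho(G)]_2\equiv N_1\pmod 2$, that is, verifying that all contributions with $2^b\| t$ and $b\geq 2$ are even; this rests entirely on the evenness of $\phi(2^b)$ for $b\geq 2$. Everything else is the standard inversion-pairing count for odd-order elements, so I do not anticipate a genuine obstacle beyond bookkeeping the parities.
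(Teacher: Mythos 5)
Your proof is correct, and for the first assertion it coincides with the paper's: $\phi(p)=p-1$ is even and divides $[\rho(G)]_p$ by Lemma~\ref{bbhamed}. For the second assertion you take a genuinely different, and more self-contained, route. The paper's proof is a one-line citation of Lemmas~\ref{bhamed} and~\ref{bbhamed}; but for $p=2$ the \emph{statements} of those lemmas only give $m\mid[\rho(G)]_2$ (with $m$ the odd part of $|G|$) and $\phi(2)=1\mid[\rho(G)]_2$, neither of which determines parity. What the paper is really leaning on is the stratified Frobenius computation inside the proof of Lemma~\ref{bhamed}, namely that $N_b=B(2^bm)-B(2^{b-1}m)$, from which evenness of $N_b$ for $b\ge 2$ and oddness of $N_1$ can be extracted --- and even that route still needs the fact that the number of odd-order elements is odd. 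Your argument supplies exactly this missing parity bookkeeping explicitly and elementarily: $N_b$ is even for $b\ge 2$ because $\phi(2^b)\mid\phi(t)\mid s_t$, so $[\rho(G)]_2\equiv N_1\equiv |G|-B\pmod 2$, and the inversion pairing $\{x,x^{-1}\}$ on odd-order elements shows $B$ is odd. Every step checks out. In short, the paper's citation buys brevity at the cost of a gap a careful reader must fill, while your version is a complete proof of the second claim and avoids invoking Frobenius' theorem for the $p=2$ half altogether.
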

\begin{proof}
 By Lemma \ref{bhamed} and Lemma \ref{bbhamed}, we obtain  the result.
\end{proof}

	\begin{lemma}\label{mohasebe}
	Let $ G = {\rm PSL}(2, q) $ where $ q $ is a $p$-power ($ p $ prime). Then
		$$\rho({\rm PSL}(2,q))={\rho(\mathbb{Z}_{\frac{q-1}{2}})}^{\frac{q(q+1)}{2}} {\rho(\mathbb{Z}_{\frac{q+1}{2}})}^{\frac{q(q-1)}{2}}  p^{(q+1)(q-1)}.$$
\end{lemma}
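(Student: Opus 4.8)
The plan is to exploit the partition of $G$ supplied by Lemma~\ref{huppert}(4). Starting from $\rho(G)=\prod_{x\in G}o(x)$ and observing that the identity contributes the factor $1$, I would split the product over the nonidentity elements according to which conjugate of $P$, $A$ or $B$ each element lies in. Because Lemma~\ref{huppert}(4) asserts that $\{P^x, A^x, B^x\mid x\in G\}$ is a genuine partition of $G$, any two distinct conjugates (of the same type or of different types) meet only in the identity; hence every nonidentity element is counted exactly once, and $\rho(G)$ factors as the product of three contributions, one from each family of conjugate subgroups. (Here one reads the orders off Lemma~\ref{huppert}, so the statement is understood for $q$ odd, as the tori orders $(q\pm1)/2$ require.)

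First I would count the conjugates in each family by orbit--stabilizer, using $|G|=q(q^2-1)/2$. By Lemma~\ref{huppert}(1) there are exactly $q+1$ Sylow $p$-subgroups, i.e.\ $q+1$ conjugates of $P$. For $A$, taking $u\in A$ a generator, Lemma~\ref{huppert}(2) gives that $N_G(\langle u\rangle)=N_G(A)$ is dihedral of order $q-1$, so the number of conjugates of $A$ is $|G|/(q-1)=q(q+1)/2$. Likewise Lemma~\ref{huppert}(3) yields $|N_G(B)|=q+1$, hence $|G|/(q+1)=q(q-1)/2$ conjugates of $B$.

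Next I would evaluate the contribution of a single subgroup in each family. By Lemma~\ref{huppert}(1), $P$ is elementary abelian of order $q=p^{f}$, so each of its $q-1$ nonidentity elements has order $p$; one copy contributes $p^{\,q-1}$ and the $q+1$ copies together contribute $p^{(q-1)(q+1)}$. Each conjugate of $A$ is cyclic of order $(q-1)/2$, and since the identity factor is $1$ the product of the orders of its nonidentity elements equals exactly $\rho(\mathbb{Z}_{(q-1)/2})$; over all $q(q+1)/2$ conjugates this gives $\rho(\mathbb{Z}_{(q-1)/2})^{q(q+1)/2}$. The same argument applied to $B$ gives $\rho(\mathbb{Z}_{(q+1)/2})^{q(q-1)/2}$. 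Multiplying the three contributions produces the claimed identity.

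The step I expect to be the main obstacle is the passage from the elementwise statements in Lemma~\ref{huppert}(2)--(3) to the normalizers of the whole tori: one must justify $N_G(A)=N_G(\langle u\rangle)$ for a generator $u$ of $A$ (and similarly for $B$), since it is these normalizer orders $q-1$ and $q+1$ that pin down the exponents $q(q+1)/2$ and $q(q-1)/2$. Everything else is routine bookkeeping: the partition rules out overcounting, and the elementary-abelian structure of $P$ forces the uniform order $p$ on the $p$-part. As a sanity check I would verify the formula for $q=5$, where it returns $2^{15}3^{20}5^{24}$, in agreement with the Example.
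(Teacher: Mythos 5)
Your proof is correct and follows exactly the route the paper intends: the paper's own proof is just the single line ``By Lemma~\ref{huppert} we get the result,'' and your argument is the fully written-out version of that, partitioning $G$ via Lemma~\ref{huppert}(4), counting conjugates by orbit--stabilizer, and multiplying the three contributions. Your caveats (the restriction to $q$ odd and the identification $N_G(A)=N_G(\langle u\rangle)$) are reasonable points the paper glosses over, and your sanity check at $q=5$ matches the paper's Example.
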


\begin{proof}
	By Lemma \ref{huppert}  we get the result.
\end{proof}
	\begin{theorem}
		Let $G$ be a finite group. Then
		$G \cong {\rm PSL}(2,11)$  if and only if 
		 $ {\operatorname{Exp}}_{\rho}(G) ={\operatorname{Exp}}_{\rho}({\rm PSL}(2,11))$.
	\end{theorem}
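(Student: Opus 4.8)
The plan is to dispatch the forward direction by a direct computation and then concentrate on the converse. Using Lemma~\ref{mohasebe} with $q=11$ (so $\frac{q-1}{2}=5$, $\frac{q+1}{2}=6$, $\rho(\mathbb{Z}_5)=5^4$ and $\rho(\mathbb{Z}_6)=2^3\cdot3^4$), I compute $\rho({\rm PSL}(2,11))=(5^4)^{66}(2^3\cdot3^4)^{55}\,11^{120}=2^{165}\cdot3^{220}\cdot5^{264}\cdot11^{120}$, so that ${\operatorname{Exp}}_{\rho}({\rm PSL}(2,11))=\{120,165,220,264\}$; the forward implication is then immediate. For the converse I assume ${\operatorname{Exp}}_{\rho}(G)=\{120,165,220,264\}$. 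The primes dividing these four numbers are exactly $2,3,5,11$, and $|E(G)|=4=|{\operatorname{Exp}}_{\rho}(G)|$, so Remark~\ref{rem} gives $\pi(G)=\{2,3,5,11\}$.

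Next I would determine $|G|$ and which exponent sits on which prime. By Corollary~\ref{2-part} the $2$-exponent is the unique odd value, forcing $[\rho(G)]_2=165$; then Lemma~\ref{bhamed} with $p=2$ gives $3^{\alpha_3}5^{\alpha_5}11^{\alpha_{11}}\mid 165=3\cdot5\cdot11$, hence $\alpha_3=\alpha_5=\alpha_{11}=1$ and $|G|=2^{\alpha_2}\cdot3\cdot5\cdot11$. Applying Lemma~\ref{bhamed} to the remaining primes (the divisor $2^{\alpha_2}\cdot55$ must divide $[\rho(G)]_3$, selecting $220$; then $2^{\alpha_2}\cdot33$ must divide $[\rho(G)]_5$, selecting $264$) yields $[\rho(G)]_3=220$, $[\rho(G)]_5=264$, $[\rho(G)]_{11}=120$, along with the bound $\alpha_2\le2$.

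To eliminate $\alpha_2=1$ (order $330$), I note that Sylow's theorem forces $n_{11}=1$ (the only divisor of $30$ congruent to $1$ modulo $11$), so $P_{11}\trianglelefteq G$ and, by Schur--Zassenhaus, $G=P_{11}\rtimes H$ with $|H|=30$. Lemma~\ref{mid1} then gives $[\rho(G)]_{11}=10\,|C_H(P_{11})|$, and $120=10\,|C_H(P_{11})|$ forces $|C_H(P_{11})|=12$, contradicting $12\nmid30$. Hence $\alpha_2=2$ and $|G|=660$.

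With $|G|=660$ I split on solvability. If $G$ is solvable, Lemma~\ref{hall} rules out $n_{11}=12$ (since $4\not\equiv1\pmod{11}$), so again $P_{11}\trianglelefteq G$ and $G=P_{11}\rtimes H$ with $|H|=60$; Lemma~\ref{mid1} now gives $\rho(G)=11^{120}\rho(H)^{11}$, whence $\rho(H)=2^{15}\cdot3^{20}\cdot5^{24}$. Since $|H|=60=1+15+20+24$, Lemma~\ref{karan}(2) shows $H$ has only elements of prime order, so $[\rho(H)]_5=s_5=24$ and the number of Sylow $5$-subgroups of $H$ is $24/\phi(5)=6$; but Lemma~\ref{hall} forbids $6=2\cdot3$ (as $2\not\equiv1\pmod5$), a contradiction. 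Thus $G$ is non-solvable. A composition-factor analysis (using that the only nonabelian simple groups whose order divides $660$ are ${\rm PSL}(2,5)$ and ${\rm PSL}(2,11)$) then leaves only $G\cong{\rm PSL}(2,11)$ or $G\cong{\rm PSL}(2,5)\times\mathbb{Z}_{11}$. For the latter, Lemma~\ref{productdirect} gives $\rho=2^{165}\cdot3^{220}\cdot5^{264}\cdot11^{600}$, so its exponent set is $\{165,220,264,600\}\neq\{120,165,220,264\}$, and it is excluded, yielding $G\cong{\rm PSL}(2,11)$. I expect the main obstacle to be exactly this last step: establishing that ${\rm PSL}(2,11)$ and ${\rm PSL}(2,5)\times\mathbb{Z}_{11}$ are the only non-solvable groups of order $660$, which rests on the classification of the nonabelian simple groups whose order divides $660$ together with the extension argument for the factors ${\rm PSL}(2,5)$ and $\mathbb{Z}_{11}$.
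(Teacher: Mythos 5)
Your proposal is correct, and its first half coincides with the paper's argument: compute $\rho({\rm PSL}(2,11))=2^{165}\cdot3^{220}\cdot5^{264}\cdot11^{120}$ via Lemma~\ref{mohasebe}, pin down $\pi(G)=\{2,3,5,11\}$ via Remark~\ref{rem}, force $[|G|]_3=[|G|]_5=[|G|]_{11}=1$ and match each exponent to its prime (the paper does this with Lemma~\ref{karan} and Lemma~\ref{bbbhamed}, you with Corollary~\ref{2-part} and Lemma~\ref{bhamed} -- equivalent bookkeeping leading to $|G|\in\{330,660\}$ and $\rho(G)=2^{165}\cdot3^{220}\cdot5^{264}\cdot11^{120}$). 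Where you genuinely diverge is the endgame: the paper stops here and invokes \cite[Theorem 5]{baniMal}, which states that ${\rm PSL}(2,11)$ is recognized by this value of $\rho$, whereas you re-derive that recognition result from scratch (Sylow and Schur--Zassenhaus plus Lemma~\ref{mid1} to kill $|G|=330$; Lemma~\ref{hall}, Lemma~\ref{mid1} and Lemma~\ref{karan}(2) to kill solvable groups of order $660$; then the classification of nonabelian simple groups of order dividing $660$ and a $\rho$-computation to exclude ${\rm PSL}(2,5)\times\mathbb{Z}_{11}$). Your version buys self-containedness at the cost of length, and the one step you flag -- that a non-solvable group of order $660$ is either ${\rm PSL}(2,11)$ or ${\rm PSL}(2,5)\times\mathbb{Z}_{11}$ -- is indeed the part that needs a careful minimal-normal-subgroup/centralizer argument (using ${\rm Aut}(A_5)\cong S_5$ and ${\rm Aut}(\mathbb{Z}_{11})\cong\mathbb{Z}_{10}$); it is standard, and it is precisely the content delegated to the citation in the paper's proof. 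All the individual steps you give check out: in particular $10\,|C_H(P_{11})|=120$ forcing $|C_H(P_{11})|=12\nmid 30$, and $n_5=6$ being inadmissible for a solvable group by Lemma~\ref{hall}, are both correct.
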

	\begin{proof}
	By Lemma \ref{mohasebe}, we obtain that
	\begin{align*}
	\rho({\rm PSL}(2,11))={\rho(\mathbb{Z}_{5})}^{66} {\rho(\mathbb{Z}_{6})}^{55}  11^{120}=2^{165}\cdot3^{220}\cdot5^{264}\cdot11^{120}.
	\end{align*}
Therefore $$ {\operatorname{Exp}}_{\rho}(G) =\{120, 165, 220, 264\}=\{2^3 \cdot  3\cdot 5, 3\cdot5\cdot11, 2^2\cdot5\cdot11, 2^3\cdot3\cdot11\}.$$
		Using Lemma  \ref{111}, $|\pi(G)|\geqslant 4$.
		Let $\pi(G)=\{{p_1}, {p_2}, \cdots, {p_k}\}$,
		where $p_1<p_2<\cdots<p_k$ and $4\leq k \in \mathbb{N}$.
We have
 $$ {\operatorname{Exp}}_{\rho}(G)=\{120, 165, 220, 264\}=\{2^3 \cdot 3 \cdot 5, 3 \cdot 5 \cdot 11,  2^2 \cdot 5 \cdot 11, 2^3 \cdot 3 \cdot 11\},$$
 and $E(G)=\{2, 3, 5, 11\}$.
By Remark \ref{rem}, $E(G)=\pi(G)$ and so
		$p_1=2$, $p_2=3$, $p_3=5$ and $p_4=11$.
			Using Lemma \ref{karan},
		\begin{align}\label{222}
			2 \cdot 3 \cdot 5 \cdot 11 \leq |G| \leq 1 + 120 + 165 + 220 + 264=770.
		\end{align}
		Therefore $|G|=2 \cdot 3 \cdot 5 \cdot 11$ or $2^2 \cdot 3 \cdot 5 \cdot 11$. 
 By Lemma \ref{bbbhamed},
		we have $[\rho(G)]_2=165$, $[\rho(G)]_3=220$, $[\rho(G)]_5=264$ and $[\rho(G)]_{11}=120$ and so
		$\rho(G) = 2^{165}\cdot3^{220}\cdot5^{264}\cdot11^{120}$.
		By \cite[Theorem 5]{baniMal}, we conclude  that $G \cong {\rm PSL}(2,11)$.
	\end{proof}
	\begin{theorem}
	Let $G$ be a finite group. Then
		$G \cong {\rm PSL}(2,5)$ or $G \cong \mathbb{Z}_{30}$  if and only if  ${\operatorname{Exp}}_{\rho}(G) = {\operatorname{Exp}}_{\rho}({\rm PSL}(2,5))$.
	\end{theorem}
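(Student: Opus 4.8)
The plan is to prove both implications, the reverse being a direct computation and the forward direction reducing, via the lemmas already in hand, to the exact value $\rho(G)=2^{15}\cdot 3^{20}\cdot 5^{24}$ and then to a known characterization of $\mathrm{PSL}(2,5)$ by its product of element orders. For the reverse direction I would apply Lemma~\ref{mohasebe} with $q=5$ to get $\rho(\mathrm{PSL}(2,5))=\rho(\mathbb{Z}_2)^{15}\rho(\mathbb{Z}_3)^{10}5^{24}=2^{15}\cdot 3^{20}\cdot 5^{24}$, and compute directly $\rho(\mathbb{Z}_{30})=\prod_{d\mid 30} d^{\phi(d)}=2^{15}\cdot 3^{20}\cdot 5^{24}$; hence both groups have ${\operatorname{Exp}}_\rho=\{15,20,24\}$.

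For the forward direction, assume ${\operatorname{Exp}}_\rho(G)=\{15,20,24\}$ and factor $15=3\cdot 5$, $20=2^2\cdot 5$, $24=2^3\cdot 3$. Then the set $E(G)$ of primes dividing some exponent is $\{2,3,5\}$, and since $|E(G)|=3=|{\operatorname{Exp}}_\rho(G)|$, Remark~\ref{rem} gives $\pi(G)=\{2,3,5\}$; in particular $30\mid |G|$. Lemma~\ref{karan} then yields $|G|\le 1+15+20+24=60$, so $|G|\in\{30,60\}$ and the $5$-part of $|G|$ equals $5$.

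The key step is to decide which prime carries which exponent. By Corollary~\ref{2-part}, $[\rho(G)]_2$ is odd and $[\rho(G)]_3,[\rho(G)]_5$ are even, so $[\rho(G)]_2=15$ and $\{[\rho(G)]_3,[\rho(G)]_5\}=\{20,24\}$; more robustly, Lemma~\ref{bhamed} does this uniformly. Indeed, writing $|G|=2^{\alpha}m$ with $(2,m)=1$ gives $m=15$ for both $|G|=30$ and $|G|=60$, so $15\mid[\rho(G)]_2$ forces $[\rho(G)]_2=15$; and writing $|G|=5\cdot m'$ with $(5,m')=1$, the fact that $30\mid |G|$ gives $6\mid m'$, whence $6\mid[\rho(G)]_5$, and since $6\nmid 20$ we conclude $[\rho(G)]_5=24$ and therefore $[\rho(G)]_3=20$. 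Thus $\rho(G)=2^{15}\cdot 3^{20}\cdot 5^{24}=\rho(\mathrm{PSL}(2,5))$ exactly.

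It then remains to identify $G$ from $\rho(G)$. Since this theorem generalizes \cite[Theorem 2.3]{banihamedcom}, I would invoke that $\rho$-characterization to conclude at once that $\rho(G)=2^{15}\cdot 3^{20}\cdot 5^{24}$ implies $G\cong\mathrm{PSL}(2,5)$ or $G\cong\mathbb{Z}_{30}$. If a self-contained argument is wanted, one splits on $|G|$: when $|G|=60$, equality in Lemma~\ref{karan} forces every nontrivial element of $G$ to have prime order, so $s_5=[\rho(G)]_5=24$ gives $n_5=6$ Sylow $5$-subgroups, whence $G$ is simple and $G\cong\mathrm{PSL}(2,5)$; when $|G|=30$, one lists the four isomorphism types of order $30$ and checks via Lemma~\ref{productdirect} that only $\mathbb{Z}_{30}$ realizes $\{15,20,24\}$, the others giving $\{10,12,15\}$, $\{10,15,24\}$ and $\{12,15,20\}$. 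The main obstacle is exactly this final identification in the order-$60$ case: pinning $G\cong\mathrm{PSL}(2,5)$ requires either the external $\rho$-result or the classical fact that a group of order $60$ with more than one Sylow $5$-subgroup is simple, whereas everything preceding it is bookkeeping driven by Lemma~\ref{bhamed} and Corollary~\ref{2-part}.
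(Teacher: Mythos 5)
Your proposal is correct and follows essentially the same route as the paper: compute $\{15,20,24\}$, use Remark~\ref{rem} and Lemma~\ref{karan} to force $\pi(G)=\{2,3,5\}$ and $|G|\in\{30,60\}$, pin each exponent to its prime (you do this via Corollary~\ref{2-part} and Lemma~\ref{bhamed}, the paper via Lemma~\ref{bbbhamed} --- both work), and then conclude from $\rho(G)=2^{15}\cdot3^{20}\cdot5^{24}$. The only slip is the citation for the last step: the relevant $\rho$-characterization is \cite[Theorem 3]{baniMal}, not \cite[Theorem 2.3]{banihamedcom} (which concerns ${\rm PSL}(2,5)\times\mathbb{Z}_p$), though your self-contained fallback argument for orders $30$ and $60$ is sound and covers this.
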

	\begin{proof}
			By Lemma \ref{mohasebe}, we obtain 
		\begin{align*}
			\rho({\rm PSL}(2,5))={\rho(\mathbb{Z}_{2})}^{15} {\rho(\mathbb{Z}_{3})}^{10}  5^{24}=2^{15}\cdot3^{20}\cdot5^{24}.
		\end{align*}
	Using Lemma \ref{productdirect}, we see that
			\begin{align*}
		\rho(\mathbb{Z}_{30})={\rho(\mathbb{Z}_{2})}^{15} {\rho(\mathbb{Z}_{3})}^{10} {\rho(\mathbb{Z}_{5})}^{6}=2^{15}\cdot3^{20}\cdot5^{24}.
	\end{align*}
		Therefore $ {\operatorname{Exp}}_{\rho}(G) =\{15,20,24\}=\{3 \cdot 5,2^2 \cdot 5, 2^3 \cdot3\}$ and so $E(G)=\{2, 3, 5\}$.
		By Lemma  \ref{111}, we have $|\pi(G)|\geqslant 3$.
By Remark \ref{rem}, $\pi(G)=E(G)$.
		Using Lemma \ref{karan},
		$$	2 \cdot 3 \cdot 5  \mid |G| \leq 1 + 15 + 20 + 24=60.$$
		Therefore $|G|=2 \cdot 3 \cdot 5$ or $2^2 \cdot 3 \cdot 5$.
 By Lemma \ref{bbbhamed},
		we have $[\rho(G)]_2=15$, $[\rho(G)]_3=20$ and $[\rho(G)]_5=24$  and so
		$\rho(G) = 2^{15}\cdot3^{20}\cdot5^{24}$.
		By \cite[Theorem 3]{baniMal}, we obtain that $G \cong {\rm PSL}(2,5)$ or $G \cong \mathbb{Z}_{30}$.
	\end{proof}
	\begin{theorem}
		Let $G$ be a finite group. Then
		$G \cong {\rm PSL}(2,7)$  if and only if  ${\operatorname{Exp}}_{\rho}(G) = {\operatorname{Exp}}_{\rho}({\rm PSL}(2,7))$.
	\end{theorem}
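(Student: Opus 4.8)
The plan is to reduce the statement to the recognition of $\mathrm{PSL}(2,7)$ by the \emph{value} $\rho(G)$ proved in \cite{baniMal}; the converse direction is immediate, so only the forward implication needs work. First I would compute the target set using Lemma~\ref{mohasebe}: for $q=7$ one has $\rho(\mathbb{Z}_3)=3^2$ and $\rho(\mathbb{Z}_4)=2^5$, hence
\[
\rho({\rm PSL}(2,7))=\rho(\mathbb{Z}_{3})^{28}\,\rho(\mathbb{Z}_{4})^{21}\,7^{48}=2^{105}\cdot 3^{56}\cdot 7^{48},
\]
so ${\operatorname{Exp}}_{\rho}({\rm PSL}(2,7))=\{105,56,48\}$. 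Factoring $105=3\cdot5\cdot7$, $56=2^3\cdot7$ and $48=2^4\cdot3$ gives $E(G)=\{2,3,5,7\}$. The essential difference from the two preceding theorems is that here $|E(G)|=4>3=|{\operatorname{Exp}}_{\rho}(G)|$, so Remark~\ref{rem} yields only the inclusion $\pi(G)\subseteq\{2,3,5,7\}$ rather than an equality; discarding the ``phantom'' prime $5$ is the crux.

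Next I would extract the coarse constraints. By Lemma~\ref{111}, $|\pi(G)|\geq 3$. By Corollary~\ref{2-part} every odd prime in $\pi(G)$ contributes an \emph{even} exponent while $2$, if present, contributes an odd one; since $105$ is the only odd member of $\{105,56,48\}$, this forces $2\in\pi(G)$ and $[\rho(G)]_2=105$. Applying Lemma~\ref{bhamed} with $p=2$ then shows the odd part of $|G|$ divides $105=3\cdot5\cdot7$, which is squarefree, so every odd prime of $\pi(G)$ occurs to the first power and Lemma~\ref{bbbhamed} applies to it, giving $p\nmid[\rho(G)]_p$.

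The heart of the argument is eliminating $5$, which I expect to be the main obstacle precisely because $E(G)$ is strictly larger than ${\operatorname{Exp}}_{\rho}(G)$. Suppose $5\in\pi(G)$. For every other prime $p\in\pi(G)$ the factor $5$ lies in the $p'$-part $|G|/p^{v_p(|G|)}$, so Lemma~\ref{bhamed} forces $5\mid[\rho(G)]_p$; but $105$ is the only exponent divisible by $5$, whence $[\rho(G)]_p=105$. As $105$ is odd, Corollary~\ref{2-part} forces $p=2$, so $\pi(G)\subseteq\{2,5\}$, contradicting $|\pi(G)|\geq3$. Hence $5\notin\pi(G)$, and combined with $\pi(G)\subseteq\{2,3,5,7\}$ and $|\pi(G)|\geq3$ this yields $\pi(G)=\{2,3,7\}$.

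Finally I would fix the exponent assignment. We already have $[\rho(G)]_2=105$; since $3$ and $7$ occur to the first power, Lemma~\ref{bbbhamed} gives $3\nmid[\rho(G)]_3$ and $7\nmid[\rho(G)]_7$, so from $48=2^4\cdot3$ and $56=2^3\cdot7$ the only admissible distribution of the remaining values $\{56,48\}$ is $[\rho(G)]_3=56$ and $[\rho(G)]_7=48$. Therefore $\rho(G)=2^{105}\cdot3^{56}\cdot7^{48}=\rho({\rm PSL}(2,7))$, and the recognition of ${\rm PSL}(2,7)$ by the product of element orders \cite{baniMal} gives $G\cong{\rm PSL}(2,7)$. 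The remainder is parallel to the ${\rm PSL}(2,11)$ argument; only the elimination of $5$ is genuinely new.
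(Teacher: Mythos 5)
Your proof is correct and follows essentially the same route as the paper: compute $\operatorname{Exp}_{\rho}({\rm PSL}(2,7))=\{48,56,105\}$, pin down $\pi(G)=\{2,3,7\}$ and the exponent assignment via Lemmas~\ref{bhamed} and~\ref{bbbhamed} and Corollary~\ref{2-part}, then invoke the $\rho$-recognition theorem of \cite{baniMal}. The only difference is organizational: where the paper eliminates the phantom prime $5$ by a case analysis over the candidate sets $\pi(G)\subseteq\{2,3,5,7\}$ (deriving $10\mid[\rho(G)]_3$, $10\mid[\rho(G)]_7$ or $30\mid[\rho(G)]_7$ in the bad cases), you do it uniformly --- if $5\in\pi(G)$ then $5\mid[\rho(G)]_p$ forces $[\rho(G)]_p=105$ for every other $p\in\pi(G)$, whence $p=2$ by parity, contradicting $|\pi(G)|\geq 3$ --- which incidentally also covers the case $\pi(G)=\{3,5,7\}$ that the paper passes over in silence.
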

	\begin{proof}
			Using Lemma \ref{mohasebe}, we have
		\begin{align*}
			\rho({\rm PSL}(2,7))={\rho(\mathbb{Z}_{3})}^{28} {\rho(\mathbb{Z}_{4})}^{21}  7^{56}=2^{105}\cdot3^{56}\cdot7^{48}.
		\end{align*}
		Therefore $ {\operatorname{Exp}}_{\rho}(G) = \{48, 56, 105\}=\{2^4\cdot3,   2^3\cdot7, 3\cdot5\cdot7  \}$
		and so $E(G)=\{2, 3, 5, 7\}$.
		By Lemma  \ref{111}, $|\pi(G)|\geqslant 3$.
Using Remark \ref{rem}, we have $\pi(G) \subseteq  E(G)=\{2, 3, 5, 7\}$. 
		Now we consider the following cases separately.
		\begin{itemize}
			\item Let $ \pi(G) = \{2, 3, 5\} $. If $p=3$, then by lemma \ref{bhamed}, $ 10 \mid [\rho(G)]_3 \in \{48, 56,105\}$, which is a contradiction.
			\item Let $ \pi(G) = \{2, 3, 7\} $. 
			Using Lemma \ref{karan},
			$	2 \cdot 3 \cdot 7  \mid |G| \leq 1 + 48 + 56 + 105=210.	$
			If $7^{\alpha}\mid |G|$, where $\alpha\geq2$, then $294=2 \cdot 3 \cdot 7^2 \leq 2 \cdot 3 \cdot 7^\alpha \leq210$, which is a contradiction.
			Therefore $[|G|]_7=1$. By Lemma \ref{bbbhamed},
			we have $[\rho(G)]_2=105$, $[\rho(G)]_7=48$. Thus  $[\rho(G)]_3=56$  and so
			$\rho(G) = 2^{105}\cdot3^{56}\cdot 7^{48}$.
			By  \cite[Theorem 4]{baniMal}, we see that $G \cong {\rm PSL}(2, 7)$.
			\item Let $ \pi(G) = \{2, 5, 7\} $. 
			If $p=7$, then by lemma \ref{bhamed}, $ 10 \mid [\rho(G)]_7 \in \{48, 56,105\}$, which is a contradiction.
			\item Let $ \pi(G) = \{2, 3, 5, 7\} $.
			If $p=7$, then by lemma \ref{bhamed}, $ 30 \mid [\rho(G)]_7 \in \{48, 56,105\}$, which is a contradiction.
		\end{itemize}
	The proof is now complete.
	\end{proof}

			\begin{theorem}
			Let $G$ be a finite group. Then
			$G \cong {\rm PSL}(2,13)$ or $G \cong  {\rm SmallGroup}(546, 13)=\mathbb{Z}_{14} \times (\mathbb{Z}_{13} \rtimes \mathbb{Z}_{3})$  if and only if  ${\operatorname{Exp}}_{\rho}(G) = {\operatorname{Exp}}_{\rho}({\rm PSL}(2,13))$.
		\end{theorem}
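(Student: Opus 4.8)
The plan is to compute the target exponent set, dispatch the easy direction, then pin down $|G|$ and $\rho(G)$, and finally resolve the two surviving orders through a single reduction. By Lemma~\ref{mohasebe} with $q=13$ (so $\tfrac{q-1}{2}=6$, $\tfrac{q+1}{2}=7$, $(q+1)(q-1)=168$) one gets $\rho({\rm PSL}(2,13))=\rho(\mathbb{Z}_6)^{91}\rho(\mathbb{Z}_7)^{78}\,13^{168}=2^{273}\cdot3^{364}\cdot7^{468}\cdot13^{168}$, so that ${\operatorname{Exp}}_\rho({\rm PSL}(2,13))=\{168,273,364,468\}$ and, factoring these, $E(G)=\{2,3,7,13\}$. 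For the ``if'' direction I would confirm the second group gives the same product: Lemma~\ref{mid1} applied to the Frobenius group $\mathbb{Z}_{13}\rtimes\mathbb{Z}_3$ (trivial centraliser) yields $\rho(\mathbb{Z}_{13}\rtimes\mathbb{Z}_3)=13^{12}3^{26}$, and Lemma~\ref{productdirect} over the coprime factors $\mathbb{Z}_{14}$ and $\mathbb{Z}_{13}\rtimes\mathbb{Z}_3$ then reproduces $2^{273}3^{364}7^{468}13^{168}$.

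For the converse, Lemma~\ref{111} and Remark~\ref{rem} give $\pi(G)=E(G)=\{2,3,7,13\}$, so $546\mid|G|$, while Lemma~\ref{karan} bounds $|G|\le 1+168+273+364+468=1274$; hence $|G|\in\{546,1092\}$. In both orders each odd prime divides $|G|$ exactly once, so Corollary~\ref{2-part} forces the unique odd exponent onto $2$, i.e. $[\rho(G)]_2=273$, and Lemma~\ref{bbbhamed} ($p\nmid[\rho(G)]_p$) then assigns the rest uniquely—$364$ to $3$, $468$ to $7$, $168$ to $13$—so that $\rho(G)=2^{273}3^{364}7^{468}13^{168}$ in either case. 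The problem thus reduces to classifying groups of order $546$ or $1092$ with this fixed product.

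I would treat both orders by the same two-step peeling. In each case assume $G$ solvable—automatic for $546$, since all Sylow subgroups are cyclic, and a hypothesis to be refuted for $1092$. Then Lemma~\ref{hall} excludes $n_{13}=14=2\cdot7$ (as $2,7\not\equiv1\bmod13$), so $P_{13}\trianglelefteq G$; writing $G=P_{13}\rtimes H$ by Schur--Zassenhaus and applying Lemma~\ref{mid1} gives $12\,|C_H(P_{13})|=[\rho(G)]_{13}=168$, whence $|C_H(P_{13})|=14$ and $\rho(H)^{13}=2^{273}3^{364}7^{468}$, i.e. $\rho(H)=2^{21}3^{28}7^{36}$. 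Since $n_7(H)=1$ automatically, a second application of Lemma~\ref{mid1} to $H=P_7\rtimes Q$ gives $6\,|C_Q(P_7)|=[\rho(H)]_7=36$, so $|C_Q(P_7)|=6$ and $\rho(Q)=2^{3}3^{4}$, where $|Q|=|G|/91\in\{6,12\}$. The two orders now diverge. For $|G|=546$ we have $|Q|=6$, and since $\rho(S_3)=2^33^2\neq2^33^4=\rho(\mathbb{Z}_6)$ necessarily $Q\cong\mathbb{Z}_6$; then $|C_Q(P_7)|=6$ makes $P_7$ central, so $H\cong\mathbb{Z}_{42}$, and the order-$3$ action on $P_{13}$ recorded by $|C_H(P_{13})|=14$ yields $G\cong\mathbb{Z}_{14}\times(\mathbb{Z}_{13}\rtimes\mathbb{Z}_3)$. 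For $|G|=1092$ we have $|Q|=12$, but inspecting the five groups of order $12$ shows none has product $2^33^4$ (the $2$-exponent $3$ occurs only for $A_4$, whose $3$-exponent is $8$); this contradiction forces $G$ non-solvable, and as ${\rm PSL}(2,13)$ is the unique non-abelian simple group of order $1092$, we conclude $G\cong{\rm PSL}(2,13)$.

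The hard part will be the bookkeeping in this final reduction: one must carry the centraliser orders and the induced actions faithfully through both Schur--Zassenhaus splittings so that the exponent identities $12\,|C_H(P_{13})|=168$ and $6\,|C_Q(P_7)|=36$ really pin down $|C_H(P_{13})|=14$ and $|C_Q(P_7)|=6$. The exclusion at $|G|=1092$ then rests on the small-group computation that no order-$12$ group has product $2^33^4$, together with the classification input that $1092$ supports a unique non-abelian simple group; it is the interplay of these two inputs—rather than either alone—that separates the solvable witness at order $546$ from its non-existence at order $1092$.
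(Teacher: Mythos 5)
Your proposal is correct, and its first half coincides with the paper's: both compute $\rho({\rm PSL}(2,13))=2^{273}3^{364}7^{468}13^{168}$ from Lemma~\ref{mohasebe}, verify the small group via Lemmas~\ref{productdirect} and~\ref{mid1}, use Lemma~\ref{111}, Remark~\ref{rem} and Lemma~\ref{karan} to force $\pi(G)=\{2,3,7,13\}$ and $546\mid|G|\leq 1274$, and then use Lemma~\ref{bbbhamed} (you add Corollary~\ref{2-part} for the prime $2$, which is equivalent) to pin down $\rho(G)$ exactly. The divergence is in the endgame: the paper at this point simply invokes \cite[Theorem 6]{baniMal}, which already classifies the groups with this value of $\rho$, whereas you reprove that classification from scratch — Hall's lemma (Lemma~\ref{hall}) to get $P_{13}\trianglelefteq G$ in the solvable case, two successive Schur--Zassenhaus splittings with Lemma~\ref{mid1} to peel off the $13$- and $7$-parts and extract $|C_H(P_{13})|=14$, $|C_Q(P_7)|=6$, $\rho(Q)=2^33^4$, and then a check of the groups of order $6$ and $12$. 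Your route buys self-containedness at the cost of length; the arithmetic in it checks out (e.g.\ $\rho(\mathbb{Z}_6)=2^33^4$ while no group of order $12$ attains $2^33^4$). One step you should tighten: from ``$G$ of order $1092$ is non-solvable'' you jump to ``$G$ is the unique non-abelian simple group of order $1092$''; you need the intermediate (standard but not free) observation that no non-abelian simple group of order properly dividing $1092=2^2\cdot3\cdot7\cdot13$ exists, so that a non-solvable group of this order must itself be simple. With that line added, your argument is a complete substitute for the citation the paper relies on.
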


	\begin{proof}
					Using Lemma \ref{mohasebe}, we have
		\begin{align*}
			\rho({\rm PSL}(2,13))={\rho(\mathbb{Z}_{6})}^{91} {\rho(\mathbb{Z}_{7})}^{78}  13^{168}=2^{273}\cdot3^{364}\cdot7^{468}\cdot13^{168}.
		\end{align*}
		Using Lemma \ref{productdirect} and Lemma \ref{mid1}, we obtain that
	\begin{align*}
		\rho({\mathbb{Z}_{14} \times (\mathbb{Z}_{13} \rtimes \mathbb{Z}_{3})})
		={\rho(\mathbb{Z}_{14})}^{39} {\rho(\mathbb{Z}_{13} \rtimes \mathbb{Z}_{3})}^{14} =
		(\rho(\mathbb{Z}_{2})^7\rho(\mathbb{Z}_{7})^2)^{39}(\rho(\mathbb{Z}_{13}) \rho(\mathbb{Z}_{3})^{13})^{14},
	\end{align*}
Thus $	\rho({\mathbb{Z}_{14} \times (\mathbb{Z}_{13} \rtimes \mathbb{Z}_{3})})=2^{273}\cdot3^{364}\cdot7^{468}\cdot13^{168}$.
		Therefore $$ {\operatorname{Exp}}_{\rho}(G) = \{168, 273, 364, 468\}=\{2^3\cdot 3 \cdot 7,		3 \cdot 7 \cdot 13,		2^2 \cdot 7 \cdot 13, 2^2\cdot3^2\cdot 13\}.$$
		Thus $E(G)=\{2, 3, 7, 13\}$.
		By Lemma  \ref{111}, $|\pi(G)|\geqslant 4$.
Remark \ref{rem} implies that $E(G)=\pi(G)$.
		Using Lemma \ref{karan},
		\begin{align}
			2 \cdot 3 \cdot 7 \cdot 13 \mid |G| \leq 1 + 168 + 273 + 364 + 468=1274.
		\end{align}
		Therefore $[|G|]_3=[|G|]_7=[|G|]_{11}=1$.
		By Lemma \ref{bbbhamed},
		we have $[\rho(G)]_2=273$, $[\rho(G)]_3=364$, $[\rho(G)]_7=468$ and $[\rho(G)]_{13}=168$  and so
		$\rho(G) = 2^{273}\cdot3^{364}\cdot7^{468} \cdot 13^{168}$.
		By \cite[Theorem 6]{baniMal}, we see that $G \cong {\rm PSL}(2,13)$ or
		$G \cong {\rm SmallGroup}(546, 13)=\mathbb{Z}_{14} \times (\mathbb{Z}_{13} \rtimes \mathbb{Z}_{3})$.
	\end{proof}

\begin{theorem}
	Let $p > 5$ be a prime number. Then
	$G \cong {\rm PSL}(2, 5) \times \mathbb{Z}_p$   if and only if  ${\operatorname{Exp}}_{\rho}(G) = {\operatorname{Exp}}_{\rho}({\rm PSL}(2, 5) \times \mathbb{Z}_p)$.
\end{theorem}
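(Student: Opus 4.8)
The plan is to reduce the statement to the recognition of ${\rm PSL}(2,5)\times\mathbb{Z}_p$ by the product of its element orders, established in \cite{banihamedapplication,banihamedcom}; that is, to show that the hypothesis $\operatorname{Exp}_\rho(G)=\operatorname{Exp}_\rho({\rm PSL}(2,5)\times\mathbb{Z}_p)$ already forces the full equality $\rho(G)=\rho({\rm PSL}(2,5)\times\mathbb{Z}_p)$. First, using Lemma \ref{productdirect} together with the value $\rho({\rm PSL}(2,5))=2^{15}\cdot3^{20}\cdot5^{24}$ from Lemma \ref{mohasebe} and $\rho(\mathbb{Z}_p)=p^{p-1}$, I would compute
\[
\rho({\rm PSL}(2,5)\times\mathbb{Z}_p)=2^{15p}\cdot3^{20p}\cdot5^{24p}\cdot p^{60(p-1)},
\]
so that $\operatorname{Exp}_\rho({\rm PSL}(2,5)\times\mathbb{Z}_p)=\{15p,\,20p,\,24p,\,60(p-1)\}$, four values that are pairwise distinct for every prime $p>5$.

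The main obstacle is that, unlike in the previous theorems, the passage $\pi(G)=E(G)$ of Remark \ref{rem} is \emph{not} available here: the exponent $60(p-1)$ drags in every prime dividing $p-1$, so for primes such as $p=23$ the set $E(G)$ is strictly larger than $\{2,3,5,p\}$ and has more than $|\operatorname{Exp}_\rho(G)|=4$ elements. To circumvent this I would argue through parity instead. Among the four exponents only $15p$ is odd; hence by Corollary \ref{2-part} the prime realizing it cannot be odd, which forces $2\in\pi(G)$ and $[\rho(G)]_2=15p$. Then Lemma \ref{bhamed} applied at $q=2$ shows that the odd part of $|G|$ divides $15p=3\cdot5\cdot p$, whence $\pi(G)\subseteq\{2,3,5,p\}$. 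Since $|\pi(G)|\geq4$ by Lemma \ref{111}, I conclude $\pi(G)=\{2,3,5,p\}$, and because $|\pi(G)|=4=|\operatorname{Exp}_\rho(G)|$ the assignment $q\mapsto[\rho(G)]_q$ is a bijection onto the four exponents.

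It then remains to match each prime to its exponent. From Lemma \ref{karan} I obtain $|G|\leq 1+15p+20p+24p+60(p-1)=119p-59$; were $p^2\mid|G|$ this would give $30p^2\leq 119p$, impossible for $p>5$, so $p\,\|\,|G|$. By Lemma \ref{bbbhamed} then $p\nmid[\rho(G)]_p$, and since $p\mid 20p$ and $p\mid 24p$ while $p\nmid 60(p-1)$ (as $p>5$ gives $p\nmid 60$), this pins $[\rho(G)]_p=60(p-1)$. Feeding $[\rho(G)]_2=15p$ back into Lemma \ref{bhamed} shows that the $3$- and $5$-parts of $|G|$ multiply to a divisor of $15$, so $3\,\|\,|G|$ and $5\,\|\,|G|$; a final application of Lemma \ref{bbbhamed} at $q=3$ and $q=5$ (using $3\nmid 20p$ and $5\nmid 24p$) forces $[\rho(G)]_3=20p$ and $[\rho(G)]_5=24p$.

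Combining these gives $\rho(G)=2^{15p}\cdot3^{20p}\cdot5^{24p}\cdot p^{60(p-1)}=\rho({\rm PSL}(2,5)\times\mathbb{Z}_p)$, and the recognition result of \cite{banihamedapplication,banihamedcom} then yields $G\cong{\rm PSL}(2,5)\times\mathbb{Z}_p$; the converse direction is immediate. I expect the parity step to be the crux, since it is precisely what replaces the now-unavailable identity $\pi(G)=E(G)$ used throughout the earlier proofs.
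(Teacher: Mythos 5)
Your proposal is correct and follows essentially the same route as the paper: use Corollary \ref{2-part} to identify $[\rho(G)]_2=15p$ as the unique odd exponent, Lemma \ref{bhamed} to force the odd part of $|G|$ to divide $15p$, Lemma \ref{111} to conclude $\pi(G)=\{2,3,5,p\}$, Lemma \ref{bbbhamed} to match the remaining exponents, and then the recognition theorem of \cite{banihamedcom} applied to $\rho(G)=2^{15p}\cdot3^{20p}\cdot5^{24p}\cdot p^{60(p-1)}$. Your detour through Lemma \ref{karan} to rule out $p^2\mid|G|$ is redundant (it already follows from the odd part dividing $15p$), but harmless.
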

\begin{proof}
	Using Lemma \ref{productdirect}, we see that
	$$\rho({\rm PSL}(2, 5) \times \mathbb{Z}_p)=\rho({\rm PSL}(2, 5))^p\rho(\mathbb{Z}_p)^{60}=(2^{15}3^{20}5^{24})^p(p^{p-1})^{60}.$$
	Therefore ${\operatorname{Exp}}_{\rho}(G) = \{15p, 20p, 24p, 60(p-1)\}$.
	
	By Corollary \ref{2-part}, we have $[\rho(G)]_2=15p$. Therefore $|G|=2^{\alpha } m$, where $m$ is an odd number.
Using Lemma \ref{bhamed}, we deduce  $m$	divides  $[\rho(G)]_2=15p$.
On the other hand,
	by Lemma  \ref{111}, $|\pi(G)|\geqslant 4$.
	Therefore $|G|=2^\alpha\cdot 3  \cdot 5 \cdot p$.   
	Applying  Lemma \ref{bbbhamed},
	we have $[\rho(G)]_5=24p$, $[\rho(G)]_p=60(p-1)$, and  $[\rho(G)]_3=20p$. Therefore  
	$\rho(G) = 2^{15p}\cdot3^{20p}\cdot5^{24p} \cdot p^{60(p-1)}$.
	By \cite[Theorem 2.3]{banihamedcom}, we see that $G \cong {\rm PSL}(2,5) \times \mathbb{Z}_p$.
\end{proof}	

\begin{remark}
	We note that
	\begin{align*}
&	{\operatorname{Exp}}_{\rho}(\mathbb{Z}_{60}) = {\operatorname{Exp}}_{\rho}({\rm PSL}(2, 5) \times \mathbb{Z}_2),\\
&	{\operatorname{Exp}}_{\rho}(\mathbb{Z}_{90}) = {\operatorname{Exp}}_{\rho}({\rm PSL}(2, 5) \times \mathbb{Z}_3),\\
&{\operatorname{Exp}}_{\rho}(\mathbb{Z}_{150}) = {\operatorname{Exp}}_{\rho}({\rm PSL}(2, 5) \times \mathbb{Z}_5).
		\end{align*}
Therefore the groups ${\rm PSL}(2, 5) \times \mathbb{Z}_p$, where $p \in\{2,3,5\}$, are not characterizable by the set ${\operatorname{Exp}}_{\rho}(G)$.
\end{remark}

\begin{theorem}
	Let $G$ be a finite group. Let  $q$ and $r$ be distinct  odd prime numbers. Then
		$G \cong {\rm PSL}(2, 5)$  or $G \cong \mathbb{Z}_{2qr}$  if and only if  ${\operatorname{Exp}}_{\rho}(G) =  {\operatorname{Exp}}_{\rho}(\mathbb{Z}_{2qr})$.
\end{theorem}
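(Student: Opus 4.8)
The plan is to follow the template of the preceding theorems: compute the target exponent set, squeeze $|G|$ and $\rho(G)$ into finitely many shapes, and then identify $G$. By Lemma \ref{productdirect} applied to $\mathbb{Z}_{2qr} = \mathbb{Z}_2 \times \mathbb{Z}_q \times \mathbb{Z}_r$ one gets $\rho(\mathbb{Z}_{2qr}) = 2^{qr} q^{2r(q-1)} r^{2q(r-1)}$, so that ${\operatorname{Exp}}_{\rho}(\mathbb{Z}_{2qr}) = \{qr,\, 2r(q-1),\, 2q(r-1)\}$, a three-element set since $qr$ is odd while the other two are even and distinct (they coincide only if $q=r$). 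The reverse implication is then immediate: $\mathbb{Z}_{2qr}$ realizes this set by construction, while ${\rm PSL}(2,5)$ realizes it exactly when $qr = 15$, i.e.\ when $\{q,r\} = \{3,5\}$, in which case Lemma \ref{mohasebe} gives $\rho({\rm PSL}(2,5)) = 2^{15}3^{20}5^{24}$, matching $\rho(\mathbb{Z}_{30})$.

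For the forward implication, suppose ${\operatorname{Exp}}_{\rho}(G) = \{qr,\, 2r(q-1),\, 2q(r-1)\}$. Since only $qr$ is odd, Corollary \ref{2-part} forces $2 \mid |G|$ and $[\rho(G)]_2 = qr$ (were $|G|$ odd, every exponent would be even, contradicting the presence of $qr$). Lemma \ref{bhamed} with $p = 2$ shows the odd part of $|G|$ divides $qr$, so every odd prime divisor of $|G|$ lies in $\{q,r\}$; combined with $|\pi(G)| \geq 3$ from Lemma \ref{111}, this gives $\pi(G) = \{2,q,r\}$ and $|G| = 2^{\alpha} q r$. By Lemma \ref{karan}, $|G| \leq 1 + qr + 2r(q-1) + 2q(r-1) = 1 + 5qr - 2q - 2r$, which forces $2^{\alpha} < 5$, hence $|G| \in \{2qr,\, 4qr\}$. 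Finally, since $q \parallel |G|$ and $r \parallel |G|$, Lemma \ref{bbbhamed} pins $[\rho(G)]_q = 2r(q-1)$ and $[\rho(G)]_r = 2q(r-1)$ (the remaining even exponent being a multiple of $q$, resp.\ $r$), so that $\rho(G) = \rho(\mathbb{Z}_{2qr})$.

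It then remains to identify $G$ from $|G|$ and $\rho(G) = \rho(\mathbb{Z}_{2qr})$. If $|G| = 2qr$, the inequality $\rho(H) < \rho(\mathbb{Z}_n)$ for non-cyclic $H$ of order $n$ (Garonzi et al.\ \cite{PatassinizbMATH06708963}) forces $G$ to be cyclic, so $G \cong \mathbb{Z}_{2qr}$. Suppose now $|G| = 4qr$. If $G$ is non-solvable, its unique non-abelian composition factor $S$ satisfies $|S| \mid 4qr$ with $|S|_2 = 4$ (a non-abelian simple group has neither a trivial nor a cyclic Sylow $2$-subgroup), and since by Burnside's theorem $|S|$ has at least three prime divisors we get $|S| = 4qr = |G|$; hence $G$ is simple, and the classification of simple groups with three prime divisors leaves only $G \cong A_5 = {\rm PSL}(2,5)$, forcing $\{q,r\} = \{3,5\}$. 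If $G$ is solvable, I would rule this case out by the counting argument described next.

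The solvable subcase of $|G| = 4qr$ is the main obstacle. Here I would read off the number of elements of order divisible by $q$ (resp.\ $r$) as $[\rho(G)]_q = 2r(q-1)$ (resp.\ $[\rho(G)]_r = 2q(r-1)$), and the number of $2$-elements from $[\rho(G)]_2 = qr$, and aim to show these force the Sylow counts $n_q = 2r$ and $n_r = 2q$. The congruences $n_q \equiv 1 \pmod q$ and $n_r \equiv 1 \pmod r$ demanded by Hall's Lemma \ref{hall} then read $2r \equiv 1 \pmod q$ and $2q \equiv 1 \pmod r$, which single out $\{q,r\} = \{3,5\}$; and in that borderline case $|G| = 60$ meets the bound of Lemma \ref{karan} with equality, so $G$ consists only of prime-order elements and the earlier order-$60$ analysis (equivalently \cite[Theorem 3]{baniMal}) excludes a solvable example. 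The delicate point is that a priori $G$ may contain elements of mixed order $2q, 4q, qr, 2qr,\dots$, which redistribute the contributions to each $[\rho(G)]_p$; the argument must confirm that no admissible distribution evades the rigid Sylow constraints above.
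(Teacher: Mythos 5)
Your reduction to $\rho(G)=\rho(\mathbb{Z}_{2qr})$ is correct and coincides with the paper's: the same chain Corollary \ref{2-part} $\rightarrow$ Lemma \ref{bhamed} $\rightarrow$ Lemma \ref{111} $\rightarrow$ Lemma \ref{bbbhamed} pins down $\pi(G)=\{2,q,r\}$, $|G|=2^{\alpha}qr$ and all three exponents (your extra step bounding $2^{\alpha}<5$ via Lemma \ref{karan} is fine, and your treatment of $|G|=2qr$ via \cite{PatassinizbMATH06708963} and of the non-solvable case via Burnside plus the classification of simple groups with exactly three prime divisors is sound). The divergence is at the last step: the paper simply invokes \cite[Theorem 2.5]{banihamedapplication}, which states exactly that $\rho(G)=\rho(\mathbb{Z}_{2qr})$ forces $G\cong{\rm PSL}(2,5)$ or $G\cong\mathbb{Z}_{2qr}$, whereas you attempt to re-prove that characterization from scratch.

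That attempt has a genuine gap: the solvable case $|G|=4qr$ is only a plan, not a proof, and you say so yourself. The specific failure point is the step ``$[\rho(G)]_q=2r(q-1)$ forces $n_q=2r$'': since $q\parallel|G|$, the quantity $2r(q-1)$ counts all elements whose order is divisible by $q$ (orders $q,2q,4q,qr,2qr,\dots$), not just the elements of order $q$, so you only get $s_q\le 2r(q-1)$, i.e.\ $n_q\le 2r$ together with $n_q\equiv 1\pmod q$ and $n_q\mid 4r$; nothing here pins $n_q$ down (it could be $1$), so the intended appeal to Lemma \ref{hall} and the congruences $2r\equiv 1\pmod q$, $2q\equiv 1\pmod r$ never gets off the ground. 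Ruling out solvable groups of order $4qr$ with the prescribed $\rho$ requires a genuine case analysis of how the $s_t$ distribute over the mixed orders --- this is in effect the content of \cite[Theorem 2.5]{banihamedapplication}. Either carry out that analysis in full or cite the prior result as the paper does.
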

\begin{proof}
	Using Lemma \ref{productdirect}, we see that
	$$\rho(\mathbb{Z}_{2qr})=\rho(\mathbb{Z}_2)^{qr}\rho(\mathbb{Z}_q)^{2r}\rho(\mathbb{Z}_r)^{2q}=2^{qr}q^{2r(q-1)}r^{2q(r-1)}.$$
	Therefore ${\operatorname{Exp}}_{\rho}(G) = \{{qr}, {2r(q-1)}, {2q(r-1)}\}$.
	
	By Corollary \ref{2-part}, we have $[\rho(G)]_2=qr$. Therefore $|G|=2^{\alpha } m$, where $m$ is an odd number.
	Using Lemma \ref{bhamed}, we derive $m$	divides  $[\rho(G)]_2=qr$.
	On the other hand,
	by Lemma  \ref{111}, $|\pi(G)|\geqslant 3$.
	Therefore $|G|=2^\alpha\cdot q  \cdot r$.   
	Using Lemma \ref{bbbhamed},
	we have $[\rho(G)]_q=2r(q-1)$, $[\rho(G)]_r=2q(r-1)$. Therefore  
	$\rho(G) = 2^{qr}\cdot q^{2r(q-1)}\cdot r^{2q(r-1)}=\rho(\mathbb{Z}_{2qr})$.
	By \cite[Theorem 2.5]{banihamedapplication}, we see that $G \cong {\rm PSL}(2,5)$ or $G \cong \mathbb{Z}_{2qr}$.
\end{proof}

We end with the following questions.
\begin{question}
	What information about a group $G$ can be obtained from ${\operatorname{Exp}}_{\rho}(G)$?
\end{question}
\begin{question}
	Which groups $G$ can be uniquely determined by the set ${\operatorname{Exp}}_{\rho}(G)$?
\end{question}

\end{document}